\newtheorem{mytheorem}{Theorem}[section]
\newtheorem{mylemma}[mytheorem]{Lemma}
\newtheorem{mycorollary}[mytheorem]{Corollary}
\begin{document}

\title{{Structure Fault Diameter of Hypercubes}\thanks{The research was funded by Natural Science Foundation of Xinjiang Uygur Autonomous Region (No. 2025D01C34) and Natural Science Foundation of China (No. 12261085)}}

\author{Honggang Zhao\\
College of Mathematics and System Sciences, Xinjiang University \\
Urumqi, 830046, P. R. China \\
\and
Eminjan Sabir \\
College of Mathematics and System Sciences, Xinjiang University \\
Urumqi, 830046, P. R. China \\
\and
Cheng-Kuan Lin \\
Department of Computer Science, National Yang Ming Chiao Tung University \\
Hsinchu 30010, Taiwan}

\maketitle

\runninghead{H. Zhao, E. Sabir, C.-K. Lin}{Structure Fault Diameter of Hypercubes}

\vspace{-7ex}

\begin{abstract}
Structure connectivity and substructure connectivity are innovative indicators for assessing network reliability and fault tolerance. Similarly, fault diameter evaluates fault tolerance and transmission delays in networks. This paper extends the concept of fault diameter by introducing two new variants: structure fault diameter and substructure fault diameter, derived from structure connectivity and substructure connectivity respectively.
For a connected graph $G$ with $W$-structure connectivity $\kappa(G;W)$ or $W$-substructure connectivity $\kappa^s(G;W)$, the $W$-structure fault diameter $D_f(G;W)$ and $W$-substructure fault diameter
$D_f^s(G;W)$
 are defined as the maximum diameter of any subgraph of $G$ resulting from removing up to $\kappa(G;W)-1$ $W$-structures or $\kappa^s(G;W)-1$ $W$-substructures. For the
$n$-dimensional hypercube $Q_n$ with $n \geq 3$ and
$1 \leq m \leq n - 2$, we determine both $D_f(Q_n;Q_m)$ and $D_f^s(Q_n;Q_1)$. These findings generalize existing results for the diameter and fault diameter of $Q_n$, providing a broader understanding of the hypercube's structural properties under fault conditions.
\end{abstract}

\begin{keywords}
Connectivity; Structure connectivity; Substructure connectivity; Structure fault diameter; Substructure fault diameter; Hypercube
\end{keywords}


\section{Introduction}

In the study of communication networks, graphs serve as powerful tools for modeling network structures and analyzing their properties.  The \textit{connectivity} and \textit{diameter} are fundamental parameters to measure fault tolerance and communication delay.
A reliable communication network must not only withstand faults but also maintain a minimal diameter to ensure efficient communication despite failures. This is particularly crucial in large-scale distributed systems, where disruptions can severely affect performance. To tackle this issue, the concept of \textit{fault diameter} has been introduced, which evaluates the impact of faults on a network's diameter.

The \textit{fault diameter}, $D_f(G)$, is defined as the maximum diameter of any subgraph of a connected graph $G$ obtained after removing up to
$\kappa(G)-1$ vertices, where $\kappa(G)$ represents  the graph's connectivity. The study of fault diameter provides critical insights into a network's resilience to failures and the impact of faults on communication delay. This is particularly relevant in applications such as data centers, cloud computing, and parallel processing, where maintaining low-latency communication is essential.
Analyzing fault diameter deepens our understanding of graph structures and their robustness under adversarial
conditions. This analysis provides valuable insights for designing resilient network topologies capable of effectively managing node failures. For example, hypercube networks and their variations are extensively employed in distributed computing due to their exceptional characteristics, such as symmetry, scalability, and inherent fault tolerance.  A thorough understanding of their fault diameters is essential for optimizing these networks to maintain performance and reliability during failure scenarios.

Krishnamoorthy and Krishnamurthy first introduced the concept of fault diameter, demonstrating that the fault diameter of the $n$-dimensional hypercube $Q_n$ is $n + 1$ \cite{03}. This foundational work has since been expanded to more intricate network structures. Tsai et al. studied the exchanged hypercube $EH(s, t)$ and discovered that after removing at most $s$ vertices, the diameter of the resulting graph is $s + t + 3$ for $3 \leq s \leq t$ \cite{08}. Qi and Zhu established upper bounds for the fault diameters of two families of twisted hypercubes, $H_n$ and $Z_{n, k}$ \cite{09}. Additionally, Day and Al-Ayyoub found that the fault diameter of the $k$-ary $n$-cube $Q_n^k$ increases by at most one compared to its fault-free diameter \cite{13}. Similar findings have been reported for other topologies, including star graphs \cite{15}, hierarchical cubic networks \cite{17} and exchanged crossed cubes \cite{12}.

Despite these advancements, there remains a need to investigate fault diameters across a wider range of graph structures, particularly within modern network models that incorporate complex and hierarchical designs. Such research not only enriches the theoretical understanding of network robustness but also provides practical insights for designing reliable and efficient communication systems in environments prone to faults. This paper aims to address this gap by introducing new fault diameter concepts based on structure connectivity and substructure connectivity, and applying these concepts to analyze the fault-tolerant properties of $Q_n$ under various fault conditions.

By considering the impact of structures becoming faulty instead of individual vertices, Lin et al. introduced the notions of structure connectivity and substructure connectivity \cite{02}. For a connected graph $G$, let $W$ be a subgraph of $G$. Then $W$-\textit{structure connectivity} (resp., $W$-\textit{substructure connectivity}) of $G$, denoted $\kappa(G;W)$ (resp., $\kappa^s(G;W)$), is the cardinality of a minimal set of vertex-disjoint subgraphs $\mathcal{W} = \{W_1, W_2, \ldots, W_t\}$, such that each $W_k \in \mathcal{W}$ is isomorphic to $W$ (resp., each $W_k \in \mathcal{W}$ is a connected subgraph of $W$) for $k = 1, 2, \ldots, t$, and removing $\mathcal{W}$ disconnects $G$. They also determined $\kappa(Q_n; W)$ and $\kappa^s(Q_n; W)$ for the structure $W \in \{K_1, K_{1,1}, K_{1,2}, K_{1,3}, C_4\}$.

Following this trend, many scholars have engaged in this research field. For instance, in the split-star networks $S^2_n$, Zhao and Wang determined both $\kappa(S^2_n; W)$ and $\kappa^s(S^2_n; W)$ for $W \in \{P_t, C_q\}$, where $4 \le t \le 3n - 5$ and $6 \le q \le 3n - 5$ \cite{22}. Ba et al.~investigated $P_t$-structure connectivity and $P_t$-substructure connectivity of augmented $k$-ary $n$-cubes $AQ^k_n$ \cite{23}. Yang et al. proved that $\kappa(S_n; K_{1,m}) = \kappa^s(S_n; K_{1,m}) = n - 1$ for $n \ge 4$ and $0 \le m \le n - 1$, where $S_n$ is a star graph \cite{24}. Recently, Guo et al. studied the $K_{1,r}$-structure connectivity of folded crossed cubes and hierarchical folded cubic networks \cite{33, 34}. Wang et al. proposed the concept of \textit{double-structure connectivity} and studied the double-structure connectivity of hypercubes \cite{21}. For the $n$-dimensional hypercube $Q_n$, Sabir and Meng considered a special kind of substructure connectivity, called \textit{$W$-subcube connectivity} $\kappa^{sc}(Q_n; W)$, by restricting the structure $W$ and its subgraphs to subcubes of $Q_n$ \cite{04}.

In this paper, we propose two novel extensions of the fault diameter, defined based on the concepts of structure connectivity and substructure connectivity. The $W$-\textit{structure fault diameter}, denoted as $D_f(G;W)$, of a connected graph $G$ with $W$-structure connectivity $\kappa(G;W)$, is the maximum diameter of any subgraph of $G$ obtained by removing up to $\kappa(G;W) - 1$ $W$-structures. Similarly, the $W$-\textit{substructure fault diameter}, denoted as $D^s_f(G;W)$, of $G$ with $W$-substructure connectivity $\kappa^s(G;W)$, is the maximum diameter of any subgraph of $G$ obtained by removing up to $\kappa^s(G;W) - 1$ $W$-substructures. Importantly, when $W$ is a single vertex (i.e., $K_1$), the $W$-structure fault diameter and $W$-substructure fault diameter reduce to the traditional fault diameter. Note that the $W$-structure connectivity is defined based on $W$-structure sets, i.e., a $W$-structure set is a set of graphs whose every element is isomorphic to $W$. Similarly, the $W$-substructure connectivity is defined based on $W$-substructure sets, i.e., a $W$-substructure set is a set of graphs whose every element is isomorphic to a connected subgraph of $W$. This means that every element in a $W$-substructure is allowed to be isomorphic to $W$. So, by definition of $W$-structure fault diameter and $W$-substructure fault diameter, one can infer that $D_f(G;W)\le D^s_f(G;W)$ when $\kappa(G;W)=\kappa^s(G;W)$. For instance, for the structure $K_{1,2}$ and a graph $G$, we set $\mathcal{W}_1$ as a $K_{1,2}$-structure set and $\mathcal{W}_2$ as a $K_{1,2}$-substructure set. This implies that every element in $\mathcal{W}_1$ is $K_{1,2}$, yet elements in $\mathcal{W}_2$ may be $K_1$, $K_{1,1}$, or $ K_{1,2}$. So, when $\kappa(G;W)=\kappa^s(G;W)$, the choice range of structures in $\mathcal{W}_2$ is broader than $\mathcal{W}_1$. This infer that $D_f(G;W)$ dose not exceed $D^{s}_f(G;W)$ if $\kappa(G;W)=\kappa^s(G;W)$.

The $n$-dimensional hypercube $Q_n$, known for its symmetry, scalability, and fault tolerance, is one of the most popular interconnection networks. For more relevant research, we can refer to \cite{04,05,06,07,18,31,32}. It is well established that the diameter $D(Q_n)$ and the fault diameter $D_f(Q_n)$ of $Q_n$ are $n$ and $n + 1$, respectively. In this paper, we extend these results by proving the following:

\begin{enumerate}
    \item $D_f(Q_n;Q_m) = n$ for $n = m + 2$ and $D_f(Q_n;Q_m) = n + 1$ for $n \geq m + 3$.
    \item $D^s_f(Q_n;Q_1) = n + 1$ for $n \geq 4$, where $Q_1 \cong K_2$.
\end{enumerate}

The rest of this paper is organized as follows. In Section 2, we introduce the definitions and notations used throughout this study. In Section 3, we present our main results and proofs. Finally, in Section 4, we conclude the paper and discuss potential directions for future research.

\section{Preliminaries}

The definitions and notation of graph are based on \cite{01}. Let $G=(V,E)$ be a \textit{graph} with vertex set $V(G)$ and edge set $E(G)$. We use a vertex $v\in G$ and an edge $e\in G$ to represent $v\in V(G)$ and $e\in E(G)$, respectively. A graph $G$ is \textit{vertex transitive} if there is an isomorphism $f$ from $G$ into itself such that $f(u)=v$ for any two vertices $u$ and $v$ of $G$. A graph $G$ is \textit{edge transitive} if there is an isomorphism $f$ from $G$ into itself such that $f((u,v))=(x,y)$ for any two edges $(u,v)$ and $(x,y)$. For a vertex $u$ in a graph $G$, $N_G(u)$ denotes the \textit{neighborhood} of $u$, which is the set $\{v \mid (u,v)\in E\}$. A \textit{path} $P$ is a sequence of adjacent vertices, written as $\langle u_1, u_2, \ldots, u_n \rangle$. The \textit{length} of a path $P$, denoted $l(\textit{P})$, is the number of edges in $P$. We also write the path $\langle u_1, u_2,\ldots, u_n \rangle$ as $\langle u_1, P_1, u_i, u_{i+1},\ldots, u_j, P_2, u_t,\ldots, u_n \rangle$, where $P_1$ is the path $\langle u_1, u_2,\ldots, u_i \rangle$ and $P_2$ is the path $\langle u_j, u_{j+1},\ldots, u_t \rangle$. Hence, it is possible to write a path as $\langle u_1, Q, u_1, u_2,\ldots, u_n \rangle$ if $l(Q)=0$. We use $d_G(u,v)$ to denote the \textit{distance} between $u$ and $v$, that is, the length of a shortest path joining $u$ and $v$ in $G$. The $diameter$ of a graph $G$, denoted $D(\textit{G})$, is defined as max$\{d(u,v) \mid u,v \in V(G)\}$. We use $\langle u, P_s, v \rangle$ to denote the shortest path between $u$ and $v$ in a graph $G$. And we use $K_n$ to represent the complete graph with $n$ vertices.

\begin{figure}
	\centering
	\includegraphics[width=0.6\linewidth]{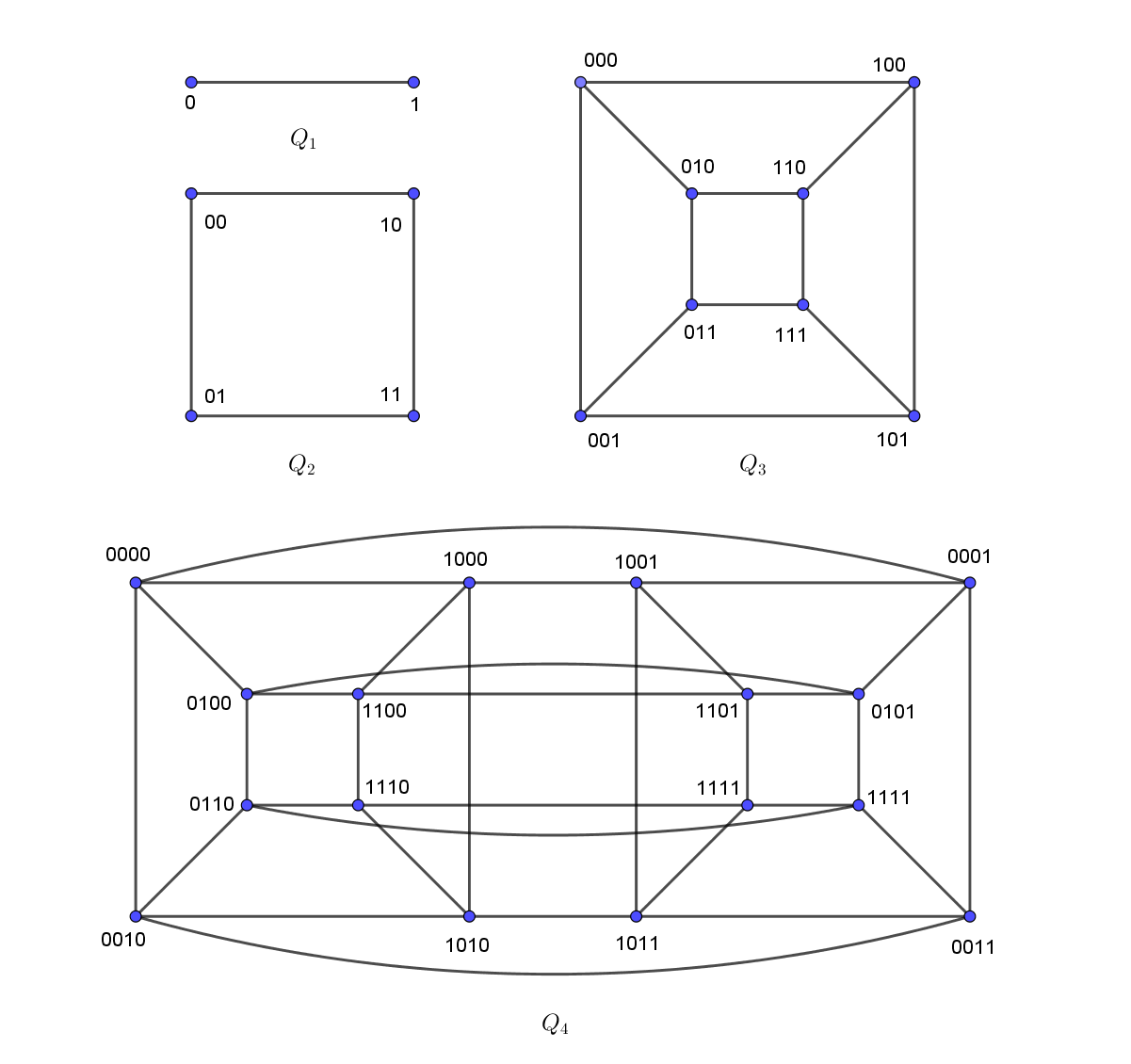}
	\caption{The $n$-dimensional hypercube for $n\in\{1,2,3,4\}$.}
	\label{fig:1}
\end{figure}

An $n$-\textit{dimensional hypercube} is an undirected graph, $Q_n$, with $2^n$ vertices and $2^{n-1}n$ edges. Each vertex in $Q_n$ can be represented as an $n$-bit binary string. We use boldface to denote vertices in $Q_n$. For any vertex $\textbf{x}={x_1}{x_2}\cdots{x_n}$ in $Q_n$, we set $(\textbf{x})^i={x^i_1}{x^i_2}\cdots{x^i_n}$ is the neighbor of $\textbf{x}$ in dimension $i$, where $x^i_j=x_j$ for every $j \ne i$ and $x^i_i=1-x_i$. In particular, $Q_0$ represents $K_1$ and $Q_1$ represents $K_2$. The $x_i$ in $\textbf{x}={x_1}{x_2}\cdots{x_n}$ is defined as $i$th bit. Fig.~\ref{fig:1} shows $Q_n$ for  $n\in\{1,2,3,4\}.$ By fixing the $n$th bit of the vertices in $Q_n$, we get two $(n-1)$-dimensional hypercubes named of ${Q^0_n}$ whose $n$th bit is $0$ and ${Q^1_n}$ whose $n$th bit is $1$, respectively. In this way, we divide $Q_n$ into two parts ${Q^0_n}$ and ${Q^1_n}$. For any vertex $\textbf{x}$ in ${Q^0_n}$ (resp., in ${Q^1_n}$), there exists an unique external neighbor $(\textbf{x})^n$ in ${Q^1_n}$ (resp., in ${Q^0_n}$). It is known that $Q_n$ has many attractive properties, such as being bipartite, $n$-regular, $n$-connected, vertex transitive and edge transitive \cite{18}.

For the $Q_m$-subcube connectivity $\kappa^{sc}(Q_n;Q_m)$ of $Q_n$, we define the $Q_m$-\textit{subcube fault diameter} $D^{sc}_f(Q_n;Q_m)$ is the maximum diameter of any sub-graph of $Q_n$ obtained by removing $\kappa^{sc}(Q_n;Q_m)-1$ or less $Q_m$-subcubes. It is worth noting that every $Q_m$-subcube is isomorphic to some $Q_i$, $0\le i\le m$, which is not the same as $Q_m$-substructure if $m\ge 2$. In particular, $Q_m$-subcube fault diameter is the same as $Q_m$-substructure fault diameter when $m=0,1$.

\begin{figure}
	\centering
	\includegraphics[height=6cm]{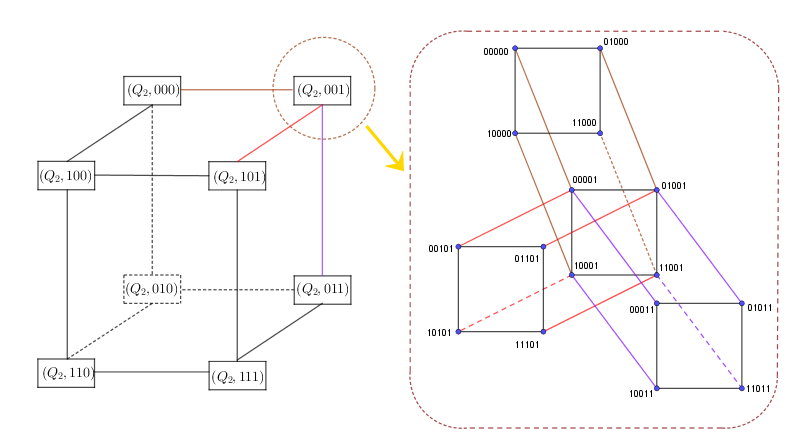}
	\caption[Fig.2]{$Q_5=Q_2\Box Q_3$.}
	\label{fig:2}
\end{figure}

\begin{figure}
	\centering
	\includegraphics[height=5cm]{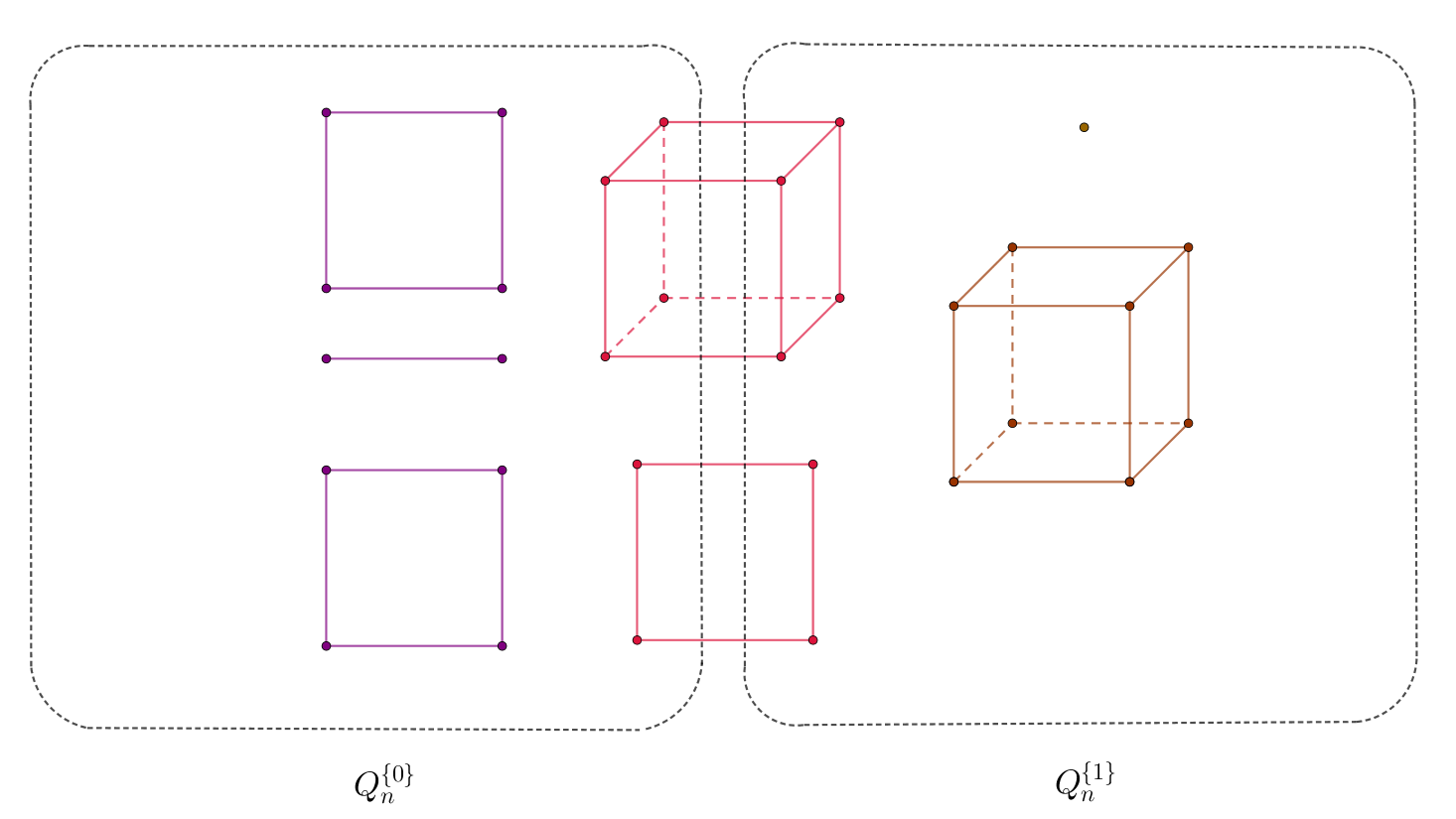}
	\caption[Fig.3]{An example of $|\mathcal{F}_3^n|=7$, $|\mathcal{A}^n_{3,0}|=3$, $|\mathcal{A}^n_{3,1}|=2$,
			$|\mathcal{B}^n_3|=2$.
			$|F^n_3|=2$, $| A^n_{3,0}|=0$, $|A^n_{3,1}| =1$ and $|B^n_3|=1$.}
	\label{fig:3}
\end{figure}

The \textit{cartesian product} of simple graphs $G$ and $H$ is the graph $G\Box H$ whose vertex set is $V(G)\times V(H)$ and whose edge set is the set of all pairs $(u_1v_1,u_2v_2)$ such that either $(u_1,u_2)\in E(G)$ and $v_1=v_2$, or $(v_1,v_2)\in E(H)$ and $u_1=u_2$ \cite{01}. Hypercubes can also be represented in the form of cartesian product, i.e., $Q_n=\underbrace{K_2 \Box K_2 \Box \cdots \Box K_2}_n$ \cite{14}. In this way, we can decompose $Q_n=Q_m\Box Q_{n-m}$. Now, for any $\textbf{t}\in V(Q_{n-m})$ we denote by $(Q_m,\textbf{t})$ the subgraph of $Q_n$ induced by the vertices whose last $n-m$ bits form the tuple $\textbf{t}$. We say that $(Q_m,\textbf{t})$ is adjacent to $(Q_m,\textbf{s})$ if $\textbf{t}$ is adjacent to $\textbf{s}$. It is easy to observe that $(Q_m,\textbf{t})$ is isomorphic to $Q_m$. As $Q_{n-m}$ is $(n-m)$-regular and $(n-m)$-connected, every vertex in $V(Q_{n-m})$ is adjacent to exactly $n-m$ vertices in $Q_{n-m}$. Let $N_{Q_{n-m}}(\textbf{t})=\{\textbf{t}_1, \textbf{t}_2,\ldots, \textbf{t}_{n-m}\}$.  Hence induced subgraph $(Q_m,\textbf{t})$ of $Q_n$ is adjacent to exactly $n-m$ subcubes, namely $(Q_m,\textbf{t}_1)$, $(Q_m,\textbf{t}_2)$,$\ldots, (Q_m,\textbf{t}_{n-m})$. Clearly, $(Q_m,\textbf{t}_i)$ is not adjacent to $(Q_m,\textbf{t}_j)$ for $1\le i,j\le n-m$, and $(Q_m,\textbf{t})$ and $(Q_m,\textbf{t}_i)$ can form a subcube, namely $(Q_m,\textbf{t}^*_i)$, which is isomorphic to $Q_{m+1}$. Fig.~\ref{fig:2} shows $Q_5=Q_2\Box Q_3$.

For any two vertices $\textbf{u}$, $\textbf{v}\in Q_n$, the \textit{Hamming distance} $H_{Q_n}(\textbf{u}$, $\textbf{v})$ is defined to be the number of different positions between the two strings. Then $\textbf{u}$ and $\textbf{v}$ are called \textit{symmetric} if $H_{Q_n}(\textbf{u}$, $\textbf{v})=n$, and $\textbf{u}$ and $\textbf{v}$ are called \textit{unsymmetric} if $H_{Q_n}(\textbf{u}$, $\textbf{v})\le n-1$. By definition of hypercubes, we know that any pair of vertices is either symmetric or unsymmetric in $Q_n$. We list some symbols in Table 1 and their illustrations in Fig.~\ref{fig:3}. It is worth noting that in Fig.~\ref{fig:3}, the faulty structures that fall completely within $Q^0_n$ are marked purple, and the faulty structures that fall completely within $Q^1_n$ are marked brown, and the faulty structures that half fall within $Q^0_n$ and half fall within $Q^1_n$ are marked red.

\begin{table}
	\label{Table11}
	\caption{Symbol table}
	\centering
	\footnotesize
	\begin{tabular}{ll}
		\toprule
		{\bf Symbol} & {\bf Definition}\\
		\midrule
		$\kappa(G;W)$ & $W$-structure connectivity of $G$\\
		$\kappa^s(G;W)$ & $W$-substructure connectivity of $G$\\
		$D_f(G;W)$ & $W$-structure fault diameter of $G$\\
		$D^s_f(G;W)$ & $W$-substructure fault diameter of $G$\\
		$Q_n$ & the $n$-dimensional hypercube\\
		$\kappa^{sc}(Q_n;Q_m)$ & $Q_m$-subcube connectivity of $Q_n$\\
		$D^{sc}_f(Q_n;Q_m)$ & $Q_m$-subcube fault diameter of $Q_n$\\
		${Q^h_n}$ & the $(n-1)$-dimensional hypercube with $V({Q^h_n})=\{\textbf{x}\mid\textbf{x}={x_1}{x_2}\cdots{x_n}$, $x_n=h\}$,\\
		& where $h\in \{{0,1}\}$\\
		$S_k(Q_n)$ &  the set $\{ U \mid U \subseteq V(Q_n)$ and the subgraph induced by $U$ is isomorphic to $Q_k \}$\\
		$\mathcal{F}_k^n$ & the vertex-disjoint subset of $\bigcup^k_{i=0} S_i(Q_n)$, i.e., any two distinct $A, B \in \mathcal{F}_k^n$\\
		& have no common vertex\\
		$\mathcal{A}^n_{k,h}$ & the set of $\mathcal{F}^n_k\cap \bigcup^k_{i=0}S_i({Q^h_n})$, i.e., for any $A \in \mathcal{A}^n_{k,h}$, we have $A\subseteq V(Q^h_n)$\\
		$\mathcal{B}^n_k$ & the set of $\mathcal{F}^n_k\setminus (\mathcal{A}^n_{k,0}\cup \mathcal{A}^n_{k,1})$, i.e., for any $B \in \mathcal{B}^n_{k}$, exactly half of the vertices in $B$ \\
		& fall within $Q^0_n$, and the other half fall within $Q^1_n$\\
		$F_k^n$ & the subset of $\mathcal{F}^n_k$, and for any $A \in F_k^n$, we have $A\in S_k(Q_n)$\\
		$A^n_{k,h}$ & the set of $F^n_k\cap S_k({Q^h_n})$\\
		$B^n_k$ & the set of $F^n_k\setminus (A^n_{k,0}\cup A^n_{k,1})$\\
		$E^n$ & the set of edges which connect ${Q^0_n}$ and ${Q^1_n}$\\
		\bottomrule
	\end{tabular}
\end{table}

The following results play crucial role in the proof of our main results.

\begin{mylemma}\label{lemma3.2}\cite{07}
	For $n\ge 2$, after the removal of $n-2$ or less vertices in $Q_n$, the diameter of the remaining graph is still $n$.
\end{mylemma}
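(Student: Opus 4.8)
The plan is to prove the two inequalities $D(Q_n-F)\le n$ and $D(Q_n-F)\ge n$ for every fault set $F\subseteq V(Q_n)$ with $|F|\le n-2$. The lower bound is a short counting argument, while the upper bound rests on a classical fan of internally disjoint paths in $Q_n$, which will be the main workhorse of the proof.

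For the upper bound the key tool I would invoke is the well-known fact that between any two vertices $\textbf{u},\textbf{v}$ with $H_{Q_n}(\textbf{u},\textbf{v})=h$ there exist $n$ internally vertex-disjoint $\textbf{u}$-$\textbf{v}$ paths: exactly $h$ of them are shortest paths of length $h$, and the remaining $n-h$ have length $h+2$. Concretely, after using vertex-transitivity to place $\textbf{u}$ at the all-zero string, the $h$ short paths flip the $h$ differing coordinates in the $h$ distinct cyclic orders (so that the set of coordinates flipped after $t$ steps is a length-$t$ arc with a distinct starting point on each path, forcing internal disjointness), while each long path first flips one of the $n-h$ agreeing coordinates, then flips all $h$ differing coordinates, and finally restores the agreeing coordinate, giving length $1+h+1$. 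Disjointness is then checked by observing that interior vertices of distinct short paths differ in which arc of differing coordinates is set, that every interior vertex of the long path attached to an agreeing coordinate $c$ carries a flipped $c$-bit, and that short-path interiors flip no agreeing coordinate at all.

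Given this fan, the upper bound follows quickly. Since the $n$ paths are internally disjoint and $|F|\le n-2$, at least two of them avoid $F$ entirely. The only paths whose length can exceed $n$ are the long ones of length $h+2$, and these exceed $n$ only when $h=n-1$, in which case there is exactly one long path; hence at most one of the $n$ paths has length greater than $n$. Consequently, among the at least two surviving fault-free paths at least one has length $\le n$, so $d_{Q_n-F}(\textbf{u},\textbf{v})\le n$ for every surviving pair $\textbf{u},\textbf{v}$, and thus $D(Q_n-F)\le n$.

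For the lower bound I would exhibit a surviving symmetric pair. There are $2^{n-1}$ symmetric (antipodal) pairs, and each faulty vertex lies in exactly one of them, so deleting $\le n-2$ vertices destroys at most $n-2$ pairs; since $2^{n-1}>n-2$ for $n\ge 2$, some symmetric pair $\{\textbf{u},\textbf{v}\}$ remains intact. Because deleting vertices cannot decrease distances, $d_{Q_n-F}(\textbf{u},\textbf{v})\ge H_{Q_n}(\textbf{u},\textbf{v})=n$, and combined with the upper bound this distance equals $n$, whence $D(Q_n-F)=n$. I expect the main obstacle to lie in making the $n$-path fan explicit and verifying its internal disjointness (in particular the bookkeeping for the cyclically ordered short paths), as the counting steps for both the two surviving fan-paths and the surviving symmetric pair are routine once that construction is established.
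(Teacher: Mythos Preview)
The paper does not prove this lemma at all; it merely cites it from \cite{07} (Kung, Lin, Liang, Hsu, and Tan). So there is no ``paper's own proof'' to compare against, and your proposal should be judged on its own merits.

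Your argument is correct. The upper bound via the classical $n$-fan of internally disjoint $\textbf{u}$--$\textbf{v}$ paths ($h$ of length $h$ and $n-h$ of length $h+2$) is exactly the standard route to this result, and your counting---at least two paths survive, at most one can exceed length $n$---is sound. The explicit cyclic-order construction you sketch for the short paths and the ``flip an agreeing coordinate, traverse, flip it back'' construction for the long paths are the textbook ones, and the disjointness verification you outline goes through. Your lower bound, counting the $2^{n-1}$ antipodal pairs and observing that at most $n-2$ are destroyed, is precisely the argument the paper later alludes to (without details) for its Lemma~\ref{lemma2.7}. Nothing is missing; the only work left is the bookkeeping you yourself flag, and that is routine.
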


\begin{mylemma}\label{lemma2.2} \cite{03}
For $n\ge 3$, $D_f(Q_n)=n+1$.
\end{mylemma}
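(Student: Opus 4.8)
The plan is to prove the equality by establishing the two matching bounds $D_f(Q_n)\ge n+1$ and $D_f(Q_n)\le n+1$ separately, recalling that $\kappa(Q_n)=n$, so that computing the fault diameter amounts to deleting up to $n-1$ vertices and measuring the worst-case distance in what remains.

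For the lower bound I would exhibit a single faulty set of size $n-1$ that forces a distance of $n+1$. Take $\mathbf{u}=0^n$ and $\mathbf{v}=1^{n-1}0$, so that $H_{Q_n}(\mathbf{u},\mathbf{v})=n-1$, and delete the $n-1$ neighbors $(\mathbf{u})^1,(\mathbf{u})^2,\ldots,(\mathbf{u})^{n-1}$ of $\mathbf{u}$ obtained by flipping one of the first $n-1$ bits. After this deletion the only surviving neighbor of $\mathbf{u}$ is $\mathbf{w}=(\mathbf{u})^n=0^{n-1}1$, and a direct check gives $H_{Q_n}(\mathbf{w},\mathbf{v})=n$. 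Hence every $\mathbf{u}$--$\mathbf{v}$ path must begin with the edge $\mathbf{u}\mathbf{w}$ and then use at least $n$ more edges. I would then verify that a length-$n$ shortest $\mathbf{w}$--$\mathbf{v}$ path avoiding the deleted vertices exists (e.g.\ flip the first $n-1$ bits, which keeps the $n$th bit equal to $1$ and so never meets the deleted unit vectors, and flip the $n$th bit last), yielding $d(\mathbf{u},\mathbf{v})=n+1$ in the surviving graph and hence $D_f(Q_n)\ge n+1$.

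For the upper bound I would show that after deleting any set $F$ with $|F|\le n-1$, any two surviving vertices $\mathbf{x},\mathbf{y}$ satisfy $d(\mathbf{x},\mathbf{y})\le n+1$. The key structural fact is that for vertices at Hamming distance $d=H_{Q_n}(\mathbf{x},\mathbf{y})$ there exist $n$ internally vertex-disjoint $\mathbf{x}$--$\mathbf{y}$ paths, of which $d$ have length $d$ and the remaining $n-d$ have length $d+2$. Since these $n$ paths meet only at the endpoints $\mathbf{x},\mathbf{y}\notin F$, the at most $n-1$ faulty vertices can intersect the interiors of at most $n-1$ of them, so at least one path survives intact. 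Its length is at most $d+2$. When $d\le n-1$ this is at most $n+1$, and when $d=n$ every one of the $n$ paths has length $n\le n+1$; in all cases $d(\mathbf{x},\mathbf{y})\le n+1$, giving $D_f(Q_n)\le n+1$.

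The main obstacle is supplying that structural fact, namely the system of $n$ internally vertex-disjoint paths with the stated lengths (a hypercube ``container'' / wide-diameter statement). I would either cite it or build the paths explicitly, grouping the $d$ differing coordinates into $d$ short paths and routing each of the remaining $n-d$ agreeing coordinates through a detour costing two extra flips, then checking internal disjointness coordinate by coordinate. A minor point requiring care is the antipodal boundary case $d=n$, where no length-$(d+2)$ paths exist and the bound must be read off directly from $n\le n+1$ rather than from $d+2$.
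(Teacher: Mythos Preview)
The paper does not prove this lemma; it is quoted without proof as a known result from \cite{03}. Your argument is correct and is essentially the classical one: the lower bound via deleting $n-1$ neighbours of a fixed vertex to force a detour through the antipode of the target, and the upper bound via the standard container/wide-diameter theorem for $Q_n$ (between any two vertices at Hamming distance $d$ there are $n$ internally vertex-disjoint paths, $d$ of length $d$ and $n-d$ of length $d+2$, so $n-1$ faults cannot block them all). The only point to tidy is that your lower-bound witness actually shows $d_{Q_n-F}(\mathbf{u},\mathbf{v})\ge n+1$ and separately exhibits a path of that length; since you only need the diameter to be \emph{at least} $n+1$ the second step is not strictly required, and connectedness of $Q_n-F$ (needed for the diameter to be finite) follows from $|F|=n-1<n=\kappa(Q_n)$.
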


\begin{mylemma}\label{lemma2.3} \cite{02} For $n\ge 3$,
	$\kappa(Q_n;Q_1)=\kappa^s(Q_n;Q_1)=n-1.$
\end{mylemma}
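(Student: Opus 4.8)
The statement asserts two equalities, so the plan is to squeeze both quantities between $n-1$ and $n-1$. First I would record the general inequality $\kappa^s(Q_n;Q_1)\le\kappa(Q_n;Q_1)$, which holds because $Q_1$ is itself a connected subgraph of $Q_1$, so every $Q_1$-structure cut is in particular a $Q_1$-substructure cut. Thus it suffices to prove an upper bound $\kappa(Q_n;Q_1)\le n-1$ by exhibiting a cut consisting of $n-1$ copies of $Q_1$, together with a matching lower bound $\kappa^s(Q_n;Q_1)\ge n-1$ by showing that no family of at most $n-2$ substructures can disconnect $Q_n$.

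For the upper bound, recall $Q_1\cong K_2$ is a single edge, and removing a $Q_1$-structure deletes both of its endpoints. I would fix an edge $\mathbf{v}\mathbf{w}$ of $Q_n$, say $\mathbf{w}=(\mathbf{v})^i$. Its external neighborhood $N_{Q_n}(\{\mathbf{v},\mathbf{w}\})\setminus\{\mathbf{v},\mathbf{w}\}$ consists of the $2(n-1)$ vertices $(\mathbf{v})^j$ and $(\mathbf{w})^j$ with $j\ne i$, all distinct because $\mathbf{v}$ and $\mathbf{w}$ share no common neighbor. The pairs $\{(\mathbf{v})^j,(\mathbf{w})^j\}$ for $j\ne i$ are vertex-disjoint edges (each joined in dimension $i$, since $(\mathbf{w})^j=((\mathbf{v})^i)^j$ differs from $(\mathbf{v})^j$ only in bit $i$), hence $n-1$ pairwise disjoint copies of $Q_1$. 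Deleting these strips every neighbor of $\mathbf{v}$ and of $\mathbf{w}$ except each other, so $\{\mathbf{v},\mathbf{w}\}$ becomes an isolated $K_2$ while $2^n-2n>0$ vertices remain for $n\ge 3$; thus $Q_n$ is disconnected and $\kappa(Q_n;Q_1)\le n-1$.

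For the lower bound, suppose toward a contradiction that some family $\mathcal{F}$ of at most $n-2$ $Q_1$-substructures disconnects $Q_n$, and let $S=\bigcup_{W_k\in\mathcal{F}}V(W_k)$ be the deleted vertices. Each substructure is a connected subgraph of $K_2$, i.e.\ a $K_1$ or a $K_2$, so $|S|\le 2(n-2)=2n-4$. Since $|S|<2n-2$, the (restricted) connectivity properties of $Q_n$ force $Q_n-S$ to contain an isolated vertex $\mathbf{v}$, whence $N_{Q_n}(\mathbf{v})\subseteq S$. But $Q_n$ is bipartite, so the $n$ vertices of $N_{Q_n}(\mathbf{v})$ are mutually non-adjacent; consequently no single $K_2$ (and, trivially, no $K_1$) can contain two of them, and each substructure of $\mathcal{F}$ covers at most one vertex of $N_{Q_n}(\mathbf{v})$. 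Covering all $n$ of them therefore needs at least $n$ substructures, contradicting $|\mathcal{F}|\le n-2$. Hence $\kappa^s(Q_n;Q_1)\ge n-1$, and combined with the above both invariants equal $n-1$.

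The main obstacle is the dichotomy used in the lower bound: that deleting fewer than $2n-2$ vertices from $Q_n$ either leaves it connected or creates an isolated vertex. This is exactly the statement that the $1$-restricted (super) connectivity of $Q_n$ equals $2n-2$, a known fact that I would either cite or establish by a short argument bounding the number of edges between a would-be smallest component of size $\ge 2$ and its complement. Everything else --- the explicit cut and the independence of a neighborhood --- is routine once this structural input is in place.
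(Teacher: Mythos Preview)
The paper does not supply its own proof of this lemma; it is quoted from Lin, Zhang, Fan, and Wang \cite{02} and used as a black box. So there is no in-paper argument to compare against.

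Your proof is correct and is in fact the natural one. The upper-bound construction (the $n-1$ parallel edges surrounding a fixed edge) is exactly the cut that motivates the value $n-1$, and the lower bound is handled cleanly: with at most $n-2$ substructures you delete at most $2n-4<2n-2$ vertices, the super-connectivity of $Q_n$ then forces an isolated vertex, and independence of a neighbourhood means each $K_1$ or $K_2$ can swallow at most one of its $n$ neighbours, giving the contradiction $n\le|\mathcal{F}|\le n-2$. The one external fact you rely on, $\kappa_1(Q_n)=2n-2$ for $n\ge 3$, is standard and, as you note, can be proved directly by an edge-count on a smallest non-singleton component.
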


\begin{mylemma}\label{lemma2.4} \cite{04} For $n\ge 3$ and $m\le n-2$,
	$\kappa^{sc}(Q_n;Q_m) = \kappa(Q_n;Q_m) = n-m$.
\end{mylemma}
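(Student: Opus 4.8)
The plan is to prove both equalities by squeezing
\[ n-m \;\le\; \kappa^{sc}(Q_n;Q_m) \;\le\; \kappa(Q_n;Q_m) \;\le\; n-m . \]
The middle inequality is immediate from the definitions: any family of vertex-disjoint copies of $Q_m$ that disconnects $Q_n$ is in particular a family of vertex-disjoint subcubes of dimension at most $m$, so every $Q_m$-structure cut is a legal $Q_m$-subcube cut, and minimising over the larger class of subcube cuts can only decrease the value. Hence it suffices to establish the two outer bounds.

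For the upper bound $\kappa(Q_n;Q_m)\le n-m$ I would use the decomposition $Q_n=Q_m\Box Q_{n-m}$ from the preliminaries. Fix $\textbf{t}\in V(Q_{n-m})$ with neighbours $\textbf{t}_1,\dots,\textbf{t}_{n-m}$ and delete the $n-m$ pairwise vertex-disjoint subcubes $(Q_m,\textbf{t}_1),\dots,(Q_m,\textbf{t}_{n-m})$, each isomorphic to $Q_m$. Every edge leaving $(Q_m,\textbf{t})$ runs into one of these neighbouring subcubes, so their removal isolates $(Q_m,\textbf{t})$; since $2^{\,n-m}>n-m+1$ whenever $n-m\ge 2$ (which holds as $m\le n-2$), the remainder of $Q_n$ is nonempty and $Q_n$ is disconnected. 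Thus $n-m$ copies of $Q_m$ suffice.

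The real content is the lower bound $\kappa^{sc}(Q_n;Q_m)\ge n-m$: I must show that deleting any family $\mathcal F=\{W_1,\dots,W_t\}$ of vertex-disjoint subcubes with $\dim W_i\le m$ and $t\le n-m-1$ leaves $Q_n$ connected, and I would do so by induction on $n$. When $t\le 1$ (in particular the base case $n=m+2$), removing a single subcube of codimension $\ge 2$ cannot disconnect $Q_n$: grouping the surviving vertices by their values on the $\ge 2$ fixed coordinates of $W_1$ exhibits them as full subcubes $\cong Q_{\dim W_1}$ indexed by $Q_{\,n-\dim W_1}$ minus one vertex, and $Q_{\ge 2}$ minus a vertex is connected. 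For the inductive step I split $Q_n$ along one coordinate $d$ into $L\cong R\cong Q_{n-1}$. Relative to $d$ each fault is fixed-$0$, fixed-$1$, or free; a fault free at $d$ splits into two subcubes of dimension $\le m-1$, one on each side. A short count shows that if at least one fault lies wholly in $L$ and at least one wholly in $R$, then each side receives at most $n-m-2=(n-1)-m-1$ subcube faults (still of dimension $\le m$), so the induction hypothesis renders both $L\setminus\mathcal F$ and $R\setminus\mathcal F$ connected.

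The step I expect to be the crux is guaranteeing such a \emph{good} splitting dimension together with a surviving bridge between the halves. For the bridge, a dimension-$d$ edge with both endpoints fault-free exists because there are $2^{n-1}$ such edges while at most $|\mathcal F|\le (n-m-1)2^m<2^{n-1}$ of them meet a fault. For the good dimension itself, the key observation is: if no coordinate $d$ carried one fault fixed to $0$ and another fixed to $1$, then at every coordinate all faults fixing it would agree on the fixed value; reading off these common values produces a vertex $\textbf{z}$ that lies in $V(W_i)$ for every $i$, contradicting the pairwise disjointness of the faults as soon as $t\ge 2$. Hence a good dimension always exists when $t\ge 2$, and together with the $t\le 1$ case this closes the induction, gives $\kappa^{sc}(Q_n;Q_m)\ge n-m$, and completes the sandwich.
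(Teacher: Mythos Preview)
The paper does not prove Lemma~\ref{lemma2.4}; it simply quotes the result from \cite{04}. So there is no in-paper argument to compare against, and the question is whether your plan stands on its own.

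It essentially does, but one step you label ``immediate from the definitions'' is not. The inequality $\kappa^{sc}(Q_n;Q_m)\le\kappa(Q_n;Q_m)$ requires that every subgraph of $Q_n$ isomorphic to $Q_m$ actually be an $m$-dimensional \emph{subcube}. This is true, but it needs an argument: if $\phi:Q_m\hookrightarrow Q_n$ sends $\mathbf 0$ to $\mathbf u$ and its neighbours to $(\mathbf u)^{a_1},\dots,(\mathbf u)^{a_m}$, then for each $v\in V(Q_m)$ of weight at least $2$ the common-neighbour property (Lemma~\ref{lemma2.5}) forces $\phi(v)$ to be the unique common neighbour of two already-determined images other than a third, pinning $\phi$ onto the subcube on coordinates $a_1,\dots,a_m$ by induction on the weight. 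Without this fact a $Q_m$-structure cut need not consist of subcubes, and your middle inequality would be unjustified.

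The remaining parts are fine. Your upper-bound cut $\{(Q_m,\mathbf t_i):1\le i\le n-m\}$ is exactly the configuration described in Section~2 and used again in Lemma~\ref{lemma3.24}. Your inductive lower bound is correct: the observation that pairwise-disjoint subcubes cannot all agree at every fixed coordinate (else they would share a common vertex) does produce a splitting coordinate with a fault on each side whenever $t\ge 2$; each half then carries at most $(n-1)-m-1$ subcube faults of dimension $\le m$, so the induction hypothesis applies; and the cross-edge count (total fault \emph{vertices} at most $(n-m-1)2^m<2^{n-1}$, not ``$|\mathcal F|$'' as you wrote) guarantees a surviving bridge. The $t\le 1$ case and the base $n=m+2$ are handled as you describe.
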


\begin{mylemma}\label{lemma2.5} \cite{06} Any two vertices $\textbf{u}$ and $\textbf{v}$ in $Q_n(n\ge 3)$ have exactly $2$ common neighbors if they have any. Besides, there are two common neighbors if and only if $((\textbf{u})^i)^j=\textbf{v}$, where $1\le i\ne j\le n$.
\end{mylemma}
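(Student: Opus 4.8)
The plan is to recast the statement in terms of the Hamming distance $H_{Q_n}$, exploiting the fact that a vertex $\textbf{w}$ belongs to $N_{Q_n}(\textbf{u})\cap N_{Q_n}(\textbf{v})$ precisely when $H_{Q_n}(\textbf{u},\textbf{w})=H_{Q_n}(\textbf{v},\textbf{w})=1$. First I would use that $H_{Q_n}$ is a metric on $V(Q_n)$: if a common neighbor $\textbf{w}$ exists, the triangle inequality gives $H_{Q_n}(\textbf{u},\textbf{v})\le H_{Q_n}(\textbf{u},\textbf{w})+H_{Q_n}(\textbf{w},\textbf{v})=2$. Since $Q_n$ is bipartite, with the two parts determined by the parity of the number of $1$'s, any two vertices sharing a neighbor lie in the same part, so $H_{Q_n}(\textbf{u},\textbf{v})$ must be even. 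For distinct $\textbf{u}$ and $\textbf{v}$ these two facts together force $H_{Q_n}(\textbf{u},\textbf{v})=2$, which is exactly the condition $\textbf{v}=((\textbf{u})^i)^j$ for some distinct coordinates $i\ne j$. This already establishes the ``only if'' direction of the characterization, and it rules out the odd distances (in particular distance $1$, which would produce a triangle) from ever contributing a common neighbor.

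For the counting, I would assume $\textbf{u}$ and $\textbf{v}$ differ exactly in the two bits $i$ and $j$ and verify directly that $(\textbf{u})^i$ and $(\textbf{u})^j$ are both common neighbors: $(\textbf{u})^i$ differs from $\textbf{u}$ only in bit $i$ and from $\textbf{v}$ only in bit $j$, while $(\textbf{u})^j$ differs from $\textbf{u}$ only in bit $j$ and from $\textbf{v}$ only in bit $i$. To show there are no others, note that every neighbor of $\textbf{u}$ has the form $(\textbf{u})^k$; if $k\notin\{i,j\}$ then $(\textbf{u})^k$ differs from $\textbf{v}$ in the three bits $i$, $j$, $k$, hence is not adjacent to $\textbf{v}$. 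Thus the common neighbors are exactly $(\textbf{u})^i$ and $(\textbf{u})^j$, giving precisely two. The converse ``if'' direction is then immediate: whenever $\textbf{v}=((\textbf{u})^i)^j$ with $i\ne j$, the same two vertices serve as common neighbors.

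This argument is essentially self-contained and short; I do not expect a genuine obstacle. The only points that need a little care are to interpret ``two vertices'' as distinct vertices (the degenerate case $\textbf{u}=\textbf{v}$ yields all $n$ neighbors rather than two, and it is automatically excluded by $i\ne j$ in the characterization), and to state the bipartiteness step explicitly so that the cases $H_{Q_n}(\textbf{u},\textbf{v})=1$ and $H_{Q_n}(\textbf{u},\textbf{v})\ge 3$ are cleanly discarded. Beyond that, the heart of the proof is simply exhibiting the bijection between common neighbors and the two flipped coordinates.
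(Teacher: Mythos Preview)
Your argument is correct and complete: the triangle inequality plus bipartiteness pins down $H_{Q_n}(\textbf{u},\textbf{v})=2$ as the only distance admitting a common neighbor, and the explicit enumeration of $(\textbf{u})^i$, $(\textbf{u})^j$ together with the exclusion of $(\textbf{u})^k$ for $k\notin\{i,j\}$ settles the count. Note, however, that the paper does not supply its own proof of this lemma; it is quoted as a known result from reference~\cite{06}, so there is no in-paper argument to compare against. Your self-contained proof is exactly the kind of direct verification one would expect for this fact and could stand in place of the citation if desired.
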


Let $Q_m$ be a subcube of $Q_n$. For any two vertices $\textbf{u}$ and $\textbf{v}$ in $Q_m(m\ge 2)$, if $\textbf{u}$ and $\textbf{v}$ have common neighbors, then by Lemma~\ref{lemma2.5}, they have exactly two common neighbors and $H_{Q_n}(\textbf{u},\textbf{v})=H_{Q_m}(\textbf{u},\textbf{v})=2$. Clearly, their common neighbors are in $Q_m$. Moreover, the two vertices of $Q_1$ have no common neighbors. Then we have the following corollary of Lemma~\ref{lemma2.5}.

\begin{mycorollary}\label{corollary2.6} Let $Q_m$ be a subcube of $Q_n$. Then, any two vertices of $Q_m$ have no common neighbor in $Q_n-Q_m$.
\end{mycorollary}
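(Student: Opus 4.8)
The plan is to reduce everything to Lemma~\ref{lemma2.5} together with a bookkeeping argument on the bit positions in which the two chosen vertices differ, handling the small cases $m\in\{0,1\}$ separately. Recall that a subcube $Q_m$ of $Q_n$ is obtained by fixing some set of $n-m$ coordinates (the \emph{fixed bits}) and letting the remaining $m$ coordinates (the \emph{free bits}) range freely; in particular every vertex of $Q_m$ agrees on the fixed bits. I would show that whenever two vertices of $Q_m$ admit a common neighbor, that neighbor differs from them only in free bits, so it necessarily stays inside $Q_m$.

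First I would dispose of the degenerate cases. If $m=0$ then $Q_0\cong K_1$ has a single vertex and the statement is vacuous. If $m=1$ then $Q_1\cong K_2$ consists of two adjacent vertices $\textbf{u},\textbf{v}$ with $H_{Q_n}(\textbf{u},\textbf{v})=1$; by Lemma~\ref{lemma2.5} a common neighbor can exist only when $\textbf{v}=((\textbf{u})^i)^j$ for distinct $i,j$, that is, only when $H_{Q_n}(\textbf{u},\textbf{v})=2$. Since this fails, $\textbf{u}$ and $\textbf{v}$ have no common neighbor at all, and in particular none in $Q_n-Q_m$.

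For the main case $m\ge 2$, I would take any two vertices $\textbf{u},\textbf{v}$ of $Q_m$ that possess a common neighbor. By Lemma~\ref{lemma2.5} they have exactly two, and $\textbf{v}=((\textbf{u})^i)^j$ for some $1\le i\ne j\le n$; thus $\textbf{u}$ and $\textbf{v}$ differ precisely in positions $i$ and $j$, and the two common neighbors are exactly $(\textbf{u})^i$ and $(\textbf{u})^j$. The key observation is that, since $\textbf{u}$ and $\textbf{v}$ both lie in $Q_m$, they agree on every fixed bit, so the differing positions $i$ and $j$ must be free bits of $Q_m$. Flipping only a free bit of $\textbf{u}$ leaves every fixed bit unchanged, whence both $(\textbf{u})^i$ and $(\textbf{u})^j$ still meet the defining constraints of $Q_m$ and hence belong to $Q_m$. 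Consequently every common neighbor of $\textbf{u}$ and $\textbf{v}$ lies in $Q_m$, so none lies in $Q_n-Q_m$, as claimed.

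The argument is short, and the only real care lies in the last step. The main obstacle --- more a point to articulate cleanly than a genuine difficulty --- is justifying that the two positions $i,j$ in which $\textbf{u}$ and $\textbf{v}$ differ are necessarily free bits of the subcube; this is precisely where the hypothesis $\textbf{u},\textbf{v}\in Q_m$ is used, and it is what forces the common neighbors back into $Q_m$ rather than out into $Q_n-Q_m$.
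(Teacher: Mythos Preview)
Your proof is correct and follows essentially the same route as the paper: both handle $m\le 1$ separately and, for $m\ge 2$, invoke Lemma~\ref{lemma2.5} to conclude that $H_{Q_n}(\textbf{u},\textbf{v})=2$ and then argue that the two common neighbors must lie in $Q_m$. The only difference is cosmetic: where the paper simply asserts ``clearly, their common neighbors are in $Q_m$,'' you spell out the free-bit versus fixed-bit justification explicitly.
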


We get the following lemma easily by the cardinality of symmetric vertices.

\begin{mylemma}\label{lemma2.7} For $n\ge 2$, let $S$ be any vertex set of $Q_n$ with $| S|< 2^{n-1}$. If $Q_n-S$ is connected, then $D(Q_n-S)\ge n$.
\end{mylemma}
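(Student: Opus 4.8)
The plan is to exploit the partition of $V(Q_n)$ into symmetric (antipodal) pairs. Every vertex $\textbf{x}=x_1x_2\cdots x_n$ has a unique symmetric partner $\bar{\textbf{x}}$, obtained by complementing all $n$ bits, and by definition $H_{Q_n}(\textbf{x},\bar{\textbf{x}})=n$. Since the relation ``being symmetric'' pairs up the vertices of $Q_n$ into blocks of size $2$, the $2^n$ vertices split into exactly $2^{n-1}$ disjoint symmetric pairs. This is the ``cardinality of symmetric vertices'' referred to in the hypothesis, and it is the combinatorial engine of the whole argument.

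First I would run a pigeonhole step to show that at least one symmetric pair survives the deletion of $S$. Each vertex lies in exactly one symmetric pair, so the removed set $S$ meets at most $|S|$ of the $2^{n-1}$ pairs. Because $|S|<2^{n-1}$, strictly fewer than $2^{n-1}$ pairs are touched by $S$, and hence at least one symmetric pair $\{\textbf{u},\textbf{v}\}$ lies entirely in $Q_n-S$, i.e.\ $\textbf{u},\textbf{v}\in V(Q_n-S)$ with $H_{Q_n}(\textbf{u},\textbf{v})=n$.

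Next I would convert this Hamming distance into a distance bound inside $Q_n-S$. In a hypercube the graph distance equals the Hamming distance, so $d_{Q_n}(\textbf{u},\textbf{v})=H_{Q_n}(\textbf{u},\textbf{v})=n$. As $Q_n-S$ is a subgraph of $Q_n$, removing vertices can never create a shorter route: any $\textbf{u}$--$\textbf{v}$ path in $Q_n-S$ is also a path in $Q_n$, whence $d_{Q_n-S}(\textbf{u},\textbf{v})\ge d_{Q_n}(\textbf{u},\textbf{v})=n$. The assumption that $Q_n-S$ is connected ensures this distance is finite and well defined. Therefore $D(Q_n-S)\ge d_{Q_n-S}(\textbf{u},\textbf{v})\ge n$, which is exactly the claim.

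There is no genuinely hard step here; the result drops out cleanly once the antipodal-pair partition is set up, which is presumably why the authors call it ``easy.'' The only point that deserves care is the pigeonhole bookkeeping: one must observe that deleting a single vertex can destroy \emph{at most one} symmetric pair (by removing one of its two members), so that the strict inequality $|S|<2^{n-1}$ is precisely what guarantees an untouched pair. This also explains why the threshold $2^{n-1}$ is sharp and why it must appear in the hypothesis.
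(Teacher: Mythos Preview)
Your proof is correct and matches the paper's intended argument: the authors give no explicit proof, remarking only that the lemma follows ``easily by the cardinality of symmetric vertices,'' which is precisely your pigeonhole count on the $2^{n-1}$ antipodal pairs. Nothing more is needed.
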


\section{$Q_1$-structure fault diameter and $Q_1$-substructure fault diameter}

Lin et al. proved that  $\kappa(Q_n;Q_1)=\kappa^s(Q_n;Q_1)=n-1$ \cite{02}. By the definition, we have 
\[
D_f(Q_n;Q_1)= \text{max} \{D(Q_n-\mathcal{F}_1)\,|\,\mathcal{F}_1 \text{ is a set of } Q_1\text{-structures and } |\mathcal{F}_1|\le n-2\},
\]
where every $Q_1$-structure is isomorphic to $K_2$. And 
\[
D^s_f(Q_n;Q_1)=\text{max}\{D(Q_n-\mathcal{F}_2)\,|\,\mathcal{F}_2 \text{ is a set of } Q_1\text{-substructures and }  |\mathcal{F}_2|\le n-2\},
\] 
where each element of $\mathcal{F}_2$ is independently isomorphic to either $K_2$ or $K_1$.

We provide some lemmas for later use.

\begin{mylemma}\label{lemma3.1} Let $m\le n-3$ and $| \mathcal{F}^n_m|\le n-1$. For any two symmetric vertices $\textbf{u}$ and $\textbf{v}$ in ${Q_n}-\mathcal{F}^n_m$, there exists a pair of vertices $(\textbf{u})^{j}$ and $(\textbf{v})^{j}$ in ${Q_n}-\mathcal{F}^n_m$ for some $j\in \{{1,2,\ldots,n}\}$.
\end{mylemma}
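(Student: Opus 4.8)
The plan is to exploit the complementary structure of symmetric vertices together with the distance restriction imposed by $m\le n-3$. Since $\textbf{u}$ and $\textbf{v}$ are symmetric, $\textbf{v}=\overline{\textbf{u}}$ is the bitwise complement of $\textbf{u}$, and for every dimension $j$ the vertices $(\textbf{u})^{j}$ and $(\textbf{v})^{j}$ are again complementary. Thus there are exactly $n$ candidate pairs $\{(\textbf{u})^{j},(\textbf{v})^{j}\}$ with $j\in\{1,2,\ldots,n\}$, and it suffices to produce one index $j$ for which neither member is covered by $\mathcal{F}^{n}_{m}$. I would call a dimension $j$ \emph{bad} if $(\textbf{u})^{j}$ or $(\textbf{v})^{j}$ lies in some subcube of $\mathcal{F}^{n}_{m}$, and then argue that the number of bad dimensions is at most $|\mathcal{F}^{n}_{m}|\le n-1<n$, which immediately yields a good $j$.

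The first step is to bound how many neighbors of $\textbf{u}$ a single faulty subcube $A$ can contain. Because $\textbf{u}\in Q_n-\mathcal{F}^{n}_{m}$ we have $\textbf{u}\notin A$. If two distinct neighbors $(\textbf{u})^{i}$ and $(\textbf{u})^{i'}$ both belonged to $A$, then they would be at Hamming distance $2$, so by \Cref{lemma2.5} and the paragraph preceding \Cref{corollary2.6} their two common neighbors — one of which is $\textbf{u}$ itself — would also lie in $A$, forcing $\textbf{u}\in A$, a contradiction. Hence each $A\in\mathcal{F}^{n}_{m}$ contains at most one neighbor of $\textbf{u}$, and by the same reasoning at most one neighbor of $\textbf{v}$.

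The key step is to show that no single $A\in\mathcal{F}^{n}_{m}$ can contain a neighbor of $\textbf{u}$ \emph{and} a neighbor of $\textbf{v}$ at the same time. Using $\textbf{v}=\overline{\textbf{u}}$, a direct computation gives $H_{Q_n}\!\big((\textbf{u})^{i},(\textbf{v})^{j}\big)=n-2$ when $i\ne j$ and $n$ when $i=j$; in both cases this value exceeds $m$, since $m\le n-3$. As any two vertices of a subcube of dimension at most $m$ are at Hamming distance at most $m$, a neighbor of $\textbf{u}$ and a neighbor of $\textbf{v}$ cannot coexist in such a subcube. Combining this with the previous step, each faulty subcube covers at most one of the $2n$ vertices $\{(\textbf{u})^{j},(\textbf{v})^{j}\}$, so at most $|\mathcal{F}^{n}_{m}|\le n-1$ of them are covered. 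Consequently at most $n-1$ dimensions are bad, leaving at least one $j$ with both $(\textbf{u})^{j}$ and $(\textbf{v})^{j}$ in $Q_n-\mathcal{F}^{n}_{m}$.

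I expect the charging argument in the key step to be the main obstacle. The naive count permits each subcube to destroy one neighbor of $\textbf{u}$ and one neighbor of $\textbf{v}$, giving up to $2(n-1)$ bad dimensions, which is too weak. It is precisely the distance estimate $H_{Q_n}\!\big((\textbf{u})^{i},(\textbf{v})^{j}\big)\ge n-2>m$ that prevents a single subcube from being charged twice, and confirming this computation — together with the two-common-neighbor reduction used to establish the ``at most one neighbor'' bound — is where the genuine work lies.
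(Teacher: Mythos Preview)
Your proposal is correct and follows essentially the same argument as the paper: both show that each faulty subcube covers at most one vertex of $N(\textbf{u})\cup N(\textbf{v})$ by combining the common-neighbor observation (your use of \Cref{lemma2.5} and the paragraph before \Cref{corollary2.6} is exactly what the paper invokes via \Cref{corollary2.6}) with the distance bound $H_{Q_n}((\textbf{u})^{i},(\textbf{v})^{k})\ge n-2>m$, and then apply pigeonhole over the $n$ dimensions. Your ``bad dimension'' framing just makes the final counting step a bit more explicit than the paper's version.
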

\begin{proof} Let $(\textbf{u})^{j}$ and $(\textbf{v})^{k}$ respectively be neighbors of $\textbf{u}$ and $\textbf{v}$ in $Q_n$, where $j,k\in \{{1,2,\ldots,n}\}$. Then $H_{Q_n}((\textbf{u})^{j}$, $(\textbf{v})^{k})=n$ if $j=k$, and $H_{Q_n}((\textbf{u})^{j}$, $(\textbf{v})^{k})=n-2$ if $j\ne k$. Combining this with the condition $m\le n-3$, we infer that no subcube in $\mathcal{F}^n_m$ can contain both $(\textbf{u})^{j}$ and $(\textbf{v})^{k}$ simultaneously. By Corollary~\ref{corollary2.6}, no subcube in $\mathcal{F}^n_m$ can contain both $(\textbf{u})^{j}$ and $(\textbf{u})^{h}$ for $j\ne h$ simultaneously. The same holds for $(\textbf{v})^{j}$ and $(\textbf{v})^{h}$ for $j\ne h$. This implies that the removal of any subcube in $\mathcal{F}^n_m$ reduces the neighbors of $\textbf{u}$ or $\textbf{v}$ by at most one. Note that $d_{Q_n}(\textbf{u})=d_{Q_n}(\textbf{v})=n$. However, $| \mathcal{F}^n_m|\le n-1$. So there must exist a pair of vertices $(\textbf{u})^{j}$ and $(\textbf{v})^{j}$ in ${Q_n}-\mathcal{F}^n_m$.
\end{proof}

\begin{mytheorem}\label{theorem3.3}
	$D^s_f(Q_3;Q_1)=3$.
\end{mytheorem}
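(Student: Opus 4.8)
The plan is to prove the identity by sandwiching $D^s_f(Q_3;Q_1)$ between $3$ and $3$. First I would unwind the definitions: by \Cref{lemma2.3} we have $\kappa^s(Q_3;Q_1)=3-1=2$, so the $Q_1$-substructure fault diameter is computed by deleting at most $\kappa^s(Q_3;Q_1)-1=1$ $Q_1$-substructure. Since a $Q_1$-substructure is a connected subgraph of $K_2$, it is either a single vertex ($Q_0\cong K_1$) or an edge ($Q_1\cong K_2$); hence an admissible fault set removes either nothing, one vertex, or two adjacent vertices, that is, at most two vertices in all.

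For the lower bound I would argue uniformly. Any admissible deletion removes at most $2<2^{3-1}=4$ vertices, and because two $Q_1$-substructures are needed to disconnect $Q_3$, the graph that remains is connected. \Cref{lemma2.7} then gives $D(Q_3-S)\ge 3$ for every such fault set $S$, so the maximum over all admissible faults is at least $3$.

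For the upper bound I would split into cases according to the removed substructure. If nothing or a single vertex is deleted, then at most $n-2=1$ vertex is removed, and \Cref{lemma3.2} immediately yields that the remaining diameter equals $3$, hence is $\le 3$. The only case not covered by \Cref{lemma3.2} is the deletion of an edge, i.e.\ of two adjacent vertices, and this is where the real work lies. Exploiting the edge-transitivity of $Q_3$, I would reduce to a single representative edge, say the one joining $000$ and $001$, list the six surviving vertices together with their adjacencies, and verify by a short finite check that every pair is at distance at most $3$; concretely, the pairs $\{010,101\}$ and $\{011,100\}$ realize distance exactly $3$ while all other pairs lie closer, so $D(Q_3-\{000,001\})=3$.

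Combining the two bounds gives $D^s_f(Q_3;Q_1)=3$. The main obstacle is the two-vertex (edge) deletion case, since \Cref{lemma3.2} only controls single-vertex faults; the saving grace is that $Q_3$ is small enough that, after reducing via edge-transitivity to one edge, the diameter can be confirmed not to exceed $3$ by direct inspection.
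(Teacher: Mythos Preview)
Your proposal is correct and follows essentially the same route as the paper: invoke \Cref{lemma2.3} to get $\kappa^s(Q_3;Q_1)=2$, handle the empty and single-vertex fault cases via \Cref{lemma3.2}, and for the edge-deletion case use edge-transitivity to reduce to $\{000,001\}$ and then verify by direct inspection that the remaining graph has diameter $3$. The only cosmetic difference is that you separate out the lower bound and justify it via \Cref{lemma2.7}, whereas the paper reads off the exact value $3$ in each case (from the equality in \Cref{lemma3.2} and from the figure).
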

\begin{proof} By Lemma~\ref{lemma2.3}, $\kappa^s(Q_3;Q_1) = 2$. Thus, we need to consider the event $| \mathcal{F}^3_1|\le \kappa^s(Q_3;Q_1)-1=1$. By Lemma~\ref{lemma3.2},
$D(Q_3-\mathcal{F}^3_1)=3$ if $| F^3_1|=0$. Bellow, we suppose that $| F^3_0|=0$ and $| F^3_1|=1$. Since $Q_3$ is vertex transitive and edge transitive, we may assume that $F^3_1=\{\{000,001\}\}$ is a faulty $Q_1$-structure in $Q_3$. From \Cref{fig:4}, we get that the diameter of $Q_3-F^3_1$ is $3$, and so $D^s_f(Q_3;Q_1)=3$.
\end{proof}

\begin{mylemma}\label{lemma3.4}
	For $n\ge 4$, $D_f(Q_n;Q_1)\ge n+1$ and $D^s_f(Q_n;Q_1)\ge n+1$.
\end{mylemma}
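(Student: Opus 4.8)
The plan is to prove only the structure bound $D_f(Q_n;Q_1)\ge n+1$ and then deduce the substructure bound for free: since the connected subgraphs of $Q_1=K_2$ include $K_2$ itself, every faulty $Q_1$-structure configuration is in particular a faulty $Q_1$-substructure configuration, so $D^s_f(Q_n;Q_1)\ge D_f(Q_n;Q_1)$ (this is exactly the inequality noted just after the definitions, and here both parameters equal $n-1$ by Lemma~\ref{lemma2.3}). It therefore suffices to exhibit $n-2=\kappa(Q_n;Q_1)-1$ pairwise vertex-disjoint faulty edges together with a vertex pair whose distance in the surviving graph is at least $n+1$.

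I would take $\textbf{u}=00\cdots0$ and $\textbf{v}=11\cdots10$, so that $H_{Q_n}(\textbf{u},\textbf{v})=n-1$. Because $Q_n$ is bipartite, every $\textbf{u}$--$\textbf{v}$ path has length congruent to $n-1 \pmod 2$; hence forcing the distance up to $n+1$ amounts to ruling out every path of length $n-1$. The $n$ neighbors of $\textbf{u}$ split into the $n-1$ "good" neighbors $(\textbf{u})^1,\dots,(\textbf{u})^{n-1}$ lying on shortest paths to $\textbf{v}$ and the single "bad" neighbor $(\textbf{u})^n$, from which $H_{Q_n}((\textbf{u})^n,\textbf{v})=n$. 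The classical fault-diameter strategy would delete all $n-1$ good neighbors, leaving $(\textbf{u})^n$ as the only exit and forcing length $1+n$.

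The main obstacle is a budget mismatch: the good neighbors form an independent set, so each $Q_1$-structure can absorb at most one of them, and deleting all $n-1$ would require $n-1$ structures, one more than the allowed $n-2$. The key device is to spend the second endpoint of each faulty edge on sealing the one short route that survives. Concretely, I take the $n-2$ faulty edges $\{(\textbf{u})^i,\,((\textbf{u})^i)^{n-1}\}$ for $i=1,\dots,n-2$ (each an edge in dimension $n-1$, and pairwise vertex-disjoint since the first endpoints have weight $1$ and the second endpoints have weight $2$ on distinct coordinate pairs). This deletes the good neighbors $(\textbf{u})^1,\dots,(\textbf{u})^{n-2}$, leaving $\textbf{u}$ with exactly the two exits $(\textbf{u})^{n-1}$ and $(\textbf{u})^n$; simultaneously, the deleted vertices $((\textbf{u})^i)^{n-1}=((\textbf{u})^{n-1})^i$ are precisely the good neighbors of the surviving exit $(\textbf{u})^{n-1}$.

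It then remains to check each exit using Hamming distance as a lower bound on the faulty distance. From $(\textbf{u})^n$ one has $H_{Q_n}((\textbf{u})^n,\textbf{v})=n$, so any path through it has length at least $1+n=n+1$. From $(\textbf{u})^{n-1}$ the only remaining non-$\textbf{u}$ neighbor is $((\textbf{u})^{n-1})^n$, and $H_{Q_n}(((\textbf{u})^{n-1})^n,\textbf{v})=n-1$, so any path through it has length at least $2+(n-1)=n+1$. Since every $\textbf{u}$--$\textbf{v}$ path begins with one of these two exits, we obtain $d_{Q_n-\mathcal{F}^n_1}(\textbf{u},\textbf{v})\ge n+1$, giving $D_f(Q_n;Q_1)\ge n+1$ and hence $D^s_f(Q_n;Q_1)\ge n+1$. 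I would finish with a one-line sanity check for $n=4$ (two faulty edges), confirming the configuration is non-degenerate.
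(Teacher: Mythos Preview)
Your proof is correct and is essentially the same construction as the paper's, up to a relabeling of coordinates: the paper takes $\textbf{x}=00\cdots0$, $\textbf{z}=(\textbf{x})^1$, and deletes the $n-2$ edges $\{(\textbf{x})^i,(\textbf{z})^i\}$ for $2\le i\le n-1$, which isolates the edge $\{\textbf{x},\textbf{z}\}$ inside $Q_n^{\{0\}}$ and forces any path to $\textbf{y}=11\cdots10$ through $(\textbf{x})^n$ or $(\textbf{z})^n$; your version swaps the roles of dimensions $1$ and $n-1$ but is otherwise identical, and your Hamming-distance case analysis of the two surviving exits mirrors the paper's exactly.
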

\begin{proof} Let $\textbf{x}=00 \cdots 0$ and $\textbf{z}=(\textbf{x})^{1}$. Then, we set $\mathcal{F}^n_1=\{\{(\textbf{x})^{i}, (\textbf{z})^{i}\}\mid2\le i\le n-1\}$. Obviously, $| \mathcal{F}^n_1|=n-2$, ${Q^0_n}-\mathcal{F}^n_1$ is disconnected and one of components of ${Q^0_n}-\mathcal{F}^n_1$ contains $\{\textbf{x}, \textbf{z}\}$. Further, we let $\textbf{y}=11 \cdots 10$. Clearly, $\{\textbf{x}, \textbf{z}\}$ and $\textbf{y}$ are in the distinct components of ${Q^0_n}-\mathcal{F}^n_1$. Since $Q_n-\mathcal{F}^n_1$ is connected and $N_{Q_n-\mathcal{F}^n_1}(\textbf{x})={\{\textbf{z},(\textbf{x})^{n}\}}$, there are two paths $\langle \textbf{x}, (\textbf{x})^{n}, \textit{P}_1, \textbf{y} \rangle$ and $\langle \textbf{x}, \textbf{z}, (\textbf{z})^{n}, \textit{P}_2, \textbf{y} \rangle$ between $\textbf{x}$ and $\textbf{y}$ in $Q_n-\mathcal{F}^n_1$. Note that $H_{Q_n}((\textbf{x})^{n},\textbf{y})=n$ and $H_{Q_n}((\textbf{z})^{n},\textbf{y})=n-1$, so we have $l(P_1)\ge n$ and $l(P_2)\ge n-1$, respectively.
Thus, the length of any path between $\textbf{x}$ and $\textbf{y}$ is at least $n+1$. This implies $D_f(Q_n;Q_1)\ge n+1$. Moreover, since $D^s_f(Q_n;Q_1) \ge D_f(Q_n;Q_1)$, $D^s_f(Q_n;Q_1)\ge n+1$.
\end{proof}

\begin{figure}
		\centering
	\includegraphics[height=5cm]{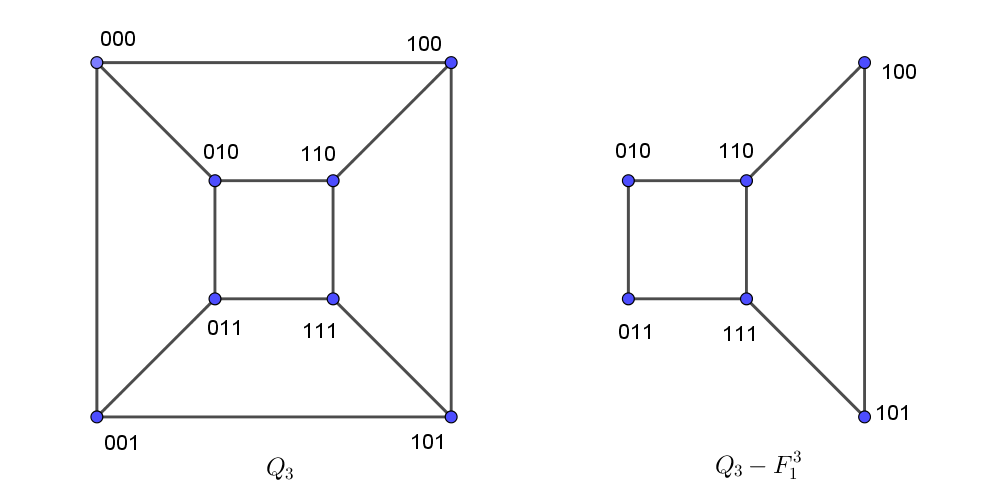}
\caption[Fig.3]{An illustration of the proof in Theorem~\ref{theorem3.3}}.
\label{fig:4}
\end{figure}

\begin{mylemma}\label{lemma3.5}
	$D^s_f(Q_4;Q_1)\le 5$.
\end{mylemma}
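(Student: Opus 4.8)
The plan is to bound $d_{Q_4-\mathcal{F}^4_1}(\textbf{u},\textbf{v})\le 5$ for every pair of surviving vertices $\textbf{u},\textbf{v}$, where $|\mathcal{F}^4_1|\le \kappa^s(Q_4;Q_1)-1=2$ by Lemma~\ref{lemma2.3}. Since each $Q_1$-substructure is a single vertex or a single edge, $\mathcal{F}^4_1$ deletes at most $4$ vertices, and $Q_4-\mathcal{F}^4_1$ is connected by the definition of $\kappa^s$. The workhorse is the decomposition $Q_4=Q_4^{\{0\}}\cup Q_4^{\{1\}}$ along a chosen dimension $j$, each half being isomorphic to $Q_3$; I route using the fact that a half with at most $2$ deleted vertices has diameter $\le 4$ (Lemma~\ref{lemma2.2}, as $D_f(Q_3)=4$) and a half with at most $1$ deleted vertex has diameter $\le 3$ (Lemma~\ref{lemma3.2}). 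A recurring elementary observation is that no deleted \emph{edge} can be incident to $\textbf{u}$ or $\textbf{v}$ (otherwise an endpoint would be faulty), so every crossing edge $\textbf{u}-(\textbf{u})^j$ or $\textbf{v}-(\textbf{v})^j$ is intact as soon as its far endpoint survives.

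For \emph{symmetric} $\textbf{u},\textbf{v}$ (Hamming distance $4$) I would invoke Lemma~\ref{lemma3.1} (applicable since $m=1\le n-3$ and $|\mathcal{F}^4_1|\le 2\le n-1$) to obtain a dimension $j$ with both $(\textbf{u})^j$ and $(\textbf{v})^j$ surviving, then split along $j$, which places $\textbf{u},\textbf{v}$ in opposite halves with $(\textbf{v})^j$ and $(\textbf{u})^j$ as their non-faulty mirror images. As the at most $4$ faulty vertices are shared between the two halves, one half carries at most $2$ of them; routing through that half (from $\textbf{u}$ to $(\textbf{v})^j$, or from $(\textbf{u})^j$ to $\textbf{v}$, a symmetric $Q_3$-pair) costs $\le 4$ by Lemma~\ref{lemma2.2}, and one intact crossing edge adds $1$, for a total of $\le 5$.

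For \emph{unsymmetric} $\textbf{u},\textbf{v}$ (Hamming distance $d\le 3$) I would instead split along a dimension $s$ on which $\textbf{u}$ and $\textbf{v}$ \emph{agree} (one exists since $d\le 3$), placing both in the same half $H_0$; write $f_0,f_1$ for the faulty-vertex counts in the two halves. If $f_0\le 2$, I route entirely inside $H_0$, where $\textbf{u},\textbf{v}$ remain at Hamming distance $d$, giving $\le 4$ by Lemma~\ref{lemma2.2} (the case $d=1$ being immediate, as the surviving edge has length $1$). If $f_0\ge 3$, then $f_1\le 1$, and I route $\textbf{u}\to(\textbf{u})^s\to\cdots\to(\textbf{v})^s\to\textbf{v}$ through the nearly clean half $H_1$: the interior segment joins an $H_1$-pair at Hamming distance $d\le 3$ with at most one deleted vertex, hence has length $\le 3$ by Lemma~\ref{lemma3.2}, and the two crossings add $2$, for a total of $\le 5$.

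The crux is this last case $f_0\ge 3$, where I must guarantee that the crossing targets $(\textbf{u})^s,(\textbf{v})^s$ themselves survive even though the faults cluster on the $H_1$ side. Here the combinatorics of only two substructures is decisive: if $f_1=0$ the targets survive trivially, while having $f_1\ge 1$ together with $f_0\ge 3$ forces the total to be $4$, so both substructures are edges and the lone $H_1$-fault is the $H_1$-endpoint of a \emph{crossing} edge $\{\textbf{a},(\textbf{a})^s\}$; since $\textbf{u},\textbf{v}$ survive we have $\textbf{a}\ne\textbf{u},\textbf{v}$, whence $(\textbf{a})^s\ne(\textbf{u})^s,(\textbf{v})^s$ and both targets are clean. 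Dispatching this placement analysis exhaustively over the possible positions of the two substructures, and checking that each half minus its faults stays connected (automatic for $Q_3$ losing $\le 2$ vertices), is the main bookkeeping obstacle; once it is settled, the cases combine to give $D^s_f(Q_4;Q_1)\le 5$, which together with Lemma~\ref{lemma3.4} would yield $D^s_f(Q_4;Q_1)=5$.
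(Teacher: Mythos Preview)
Your argument is correct, but it follows a different line from the paper's proof. The paper first dispatches the easy range: if $|F^4_1|<2$ then at most three vertices are deleted and Lemma~\ref{lemma2.2} ($D_f(Q_4)=5$) already gives the bound; it then assumes both substructures are edges, uses edge transitivity to choose the splitting dimension so that \emph{neither} faulty edge crosses it (hence $|B^4_1|=0$), and finishes with a two-case analysis on whether the two edges lie in the same half or in different halves, invoking Theorem~\ref{theorem3.3} ($D^s_f(Q_3;Q_1)=3$) for the latter. By contrast, you split on whether the pair $\textbf{u},\textbf{v}$ is symmetric or unsymmetric, use Lemma~\ref{lemma3.1} to pick the splitting dimension in the symmetric case, and in the unsymmetric case pick any dimension on which $\textbf{u},\textbf{v}$ agree and then do the explicit placement analysis of the two substructures (including the crossing-edge subcase). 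The paper's route is shorter because transitivity eliminates the crossing-edge bookkeeping and Theorem~\ref{theorem3.3} serves as a ready-made base case; your route is more self-contained (it does not need Theorem~\ref{theorem3.3}) and foreshadows the general symmetric/unsymmetric scheme used later in Lemmas~\ref{lemma3.6} and \ref{lemma3.23}, at the price of the extra case analysis when $f_0\ge 3$.
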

\begin{proof} By Lemma~\ref{lemma2.3}, $\kappa^s(Q_4;Q_1) = 3$. Thus, we need to consider the event $| \mathcal{F}^4_1|\le 2$. By Lemma~\ref{lemma2.2}, $D_f(Q_4)=5$. If $|F^4_1|<2$, then $\mathcal{F}^4_1$ contains at most $3$ vertices and we have
	$D(Q_4-\mathcal{F}^4_1)\le D_f(Q_4)=5$. Thus, we suppose that  $| F^4_0|=0$ and $| F^4_1|=2$. Since $Q_4$ is vertex transitive and edge transitive, we assume that $\textbf{x}\neq (\textbf{y})^4$ for each $\{\textbf{x}, \textbf{y}\}\in F^4_1$.
	Thus, $| B^4_1|=0$ and $F^4_1=A^4_{1,0}\cup A^4_{1,1}$. Without loss of generality, we assume
	$| A^4_{1,0}|\ge | A^4_{1,1}|$. Let $\textbf{u}$ and $\textbf{v}$ be any two vertices in $Q_4-F^4_1$, then we have the following cases:

\noindent \textbf{Case 1}. $| A^4_{1,0}|=2$. Since $D(Q_3)=3$, $d_{Q_4-F^4_1}(\textbf{u},\textbf{v})\le 3$ if $\textbf{u}$,$\textbf{v}\in {Q^1_4}$. If $\textbf{u}$,$\textbf{v}\in {Q^0_4}$, then there exists a path $\langle \textbf{u}, (\textbf{u})^{4}, \textit{P}_s, (\textbf{v})^{4}, \textbf{v} \rangle$ between $\textbf{u}$ and $\textbf{v}$ in $Q_4-F^4_1$, where $P_s$ is in ${Q^1_4}$. Since $l(\textit{P}_s)\le D({Q^1_4})=3$, $d_{Q_4-F^4_1}(\textbf{u},\textbf{v})\le 5$. Obviously, if  $\textbf{u}\in {Q^0_4}$ and $\textbf{v}\in {Q^1_4}$, then there exists a path $\langle \textbf{u}, (\textbf{u})^{4}, \textit{P}_s, \textbf{v} \rangle$ between $\textbf{u}$ and $\textbf{v}$ in $Q_4-F^4_1$, where $P_s$ is in ${Q^1_4}$. Then $d_{Q_4-F^4_1}(\textbf{u},\textbf{v})\le 4$.
	
\noindent 	\textbf{Case 2}. $| A^4_{1,0}|=1$ and $| A^4_{1,1}|=1$. By Theorem~\ref{theorem3.3}, $d_{Q_4-F^4_1}(\textbf{u},\textbf{v})\le 3$ if $\textbf{u}$,$\textbf{v}\in {Q^0_4}$ or $\textbf{u}$,$\textbf{v}\in {Q^1_4}$. So we only need to consider that $\textbf{u}\in {Q^0_4}$ and $\textbf{v}\in {Q^1_4}$. Since $N_{{Q^0_4}-A^4_{1,0}}(\textbf{u})\ge 2$, we set $\{\textbf{u}_1,\textbf{u}_2\} \subseteq N_{{Q^0_4}-A^4_{1,0}}(\textbf{u})$. Since $((\textbf{u}_1)^4)^i\ne (\textbf{u}_2)^4$ for each $i\in\{1,2,3\}$, there must exist $(\textbf{u}_1)^4\in {Q^1_4}-A^4_{1,1}$ or $(\textbf{u}_2)^4\in {Q^1_4}-A^4_{1,1}$. We suppose the former, i.e.,  $(\textbf{u}_1)^4\in {Q^1_4}-A^4_{1,1}$. Thus, there exists a path $\langle \textbf{u}, \textbf{u}_1, (\textbf{u}_1)^4,\textit{P}_s, \textbf{v} \rangle$ between $\textbf{u}$ and $\textbf{v}$ in $Q_4-F^4_1$, where $P_s$ is in ${Q^1_4}-A^4_{1,1}$. Clearly, $l(\textit{P}_s)\le D({Q^1_4-A^4_{1,1}})=3$, then $d_{Q_4-F^4_1}(\textbf{u},\textbf{v})\le 5$.
\end{proof}

\begin{mylemma}\label{lemma3.6}
	$D^s_f(Q_n;Q_1)\le n+1$ for $n\ge 4$.
\end{mylemma}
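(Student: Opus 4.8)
The plan is to induct on $n$, with \Cref{lemma3.5} supplying the base case $n=4$. So fix $n\ge 5$, assume $D^s_f(Q_{n-1};Q_1)\le n$, and take any collection $\mathcal{F}^n_1$ with $|\mathcal{F}^n_1|\le \kappa^s(Q_n;Q_1)-1=n-2$ (the value $n-1$ for $\kappa^s$ coming from \Cref{lemma2.3}); since $|\mathcal{F}^n_1|<\kappa^s(Q_n;Q_1)$ the graph $Q_n-\mathcal{F}^n_1$ is connected, and I must bound its diameter by $n+1$. Each edge-type member of $\mathcal{F}^n_1$ occupies a single dimension, and there are at most $n-2<n$ of them, so some dimension carries no edge of $\mathcal{F}^n_1$; using vertex- and edge-transitivity I take it to be dimension $n$. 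Splitting along dimension $n$ then gives $\mathcal{B}^n_1=\emptyset$, so $\mathcal{F}^n_1=\mathcal{A}^n_{1,0}\cup\mathcal{A}^n_{1,1}$ with each member lying wholly in one half, and $a_0+a_1\le n-2$ where $a_h=|\mathcal{A}^n_{1,h}|$ and ${Q^{\{h\}}_n}\cong Q_{n-1}$.

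First I would dispose of the unbalanced split. If some half, say ${Q^{\{1\}}_n}$, is fault-free ($a_1=0$), I route everything through it: a pair in ${Q^{\{0\}}_n}$ is joined by $\langle\textbf{u},(\textbf{u})^n,P_s,(\textbf{v})^n,\textbf{v}\rangle$ of length at most $1+(n-1)+1=n+1$, while mixed pairs and pairs inside ${Q^{\{1\}}_n}$ are shorter. Otherwise $a_0,a_1\ge 1$, whence $a_h\le n-3=\kappa^s(Q_{n-1};Q_1)-1$ for both $h$, so each half is connected and, by the induction hypothesis, has diameter at most $n$. Same-half pairs are then already within distance $n$, and for a cross-half pair $\textbf{u}\in{Q^{\{0\}}_n}$, $\textbf{v}\in{Q^{\{1\}}_n}$ a usable ``direct'' crossing settles matters: if $(\textbf{v})^n$ is non-faulty I use $\langle\textbf{u},P_s,(\textbf{v})^n,\textbf{v}\rangle$ (length $\le n+1$), and symmetrically if $(\textbf{u})^n$ is non-faulty.

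This leaves the genuinely delicate case of a cross-half pair with both $(\textbf{u})^n$ and $(\textbf{v})^n$ faulty, and I expect it to be the main obstacle. A parity remark guides the search: in the bipartite $Q_n$ every $\textbf{u}$--$\textbf{v}$ walk has length congruent to $H_{Q_n}(\textbf{u},\textbf{v})\pmod 2$, so when $\textbf{u},\textbf{v}$ are symmetric ($H_{Q_n}(\textbf{u},\textbf{v})=n$) a route of length $\le n+1$ must in fact have length exactly $n$, i.e., a whole geodesic has to survive the faults. Here I would invoke \Cref{lemma3.1} (applicable since $1\le n-3$ and $|\mathcal{F}^n_1|\le n-1$): it produces an index $j$ with $(\textbf{u})^j,(\textbf{v})^j$ both non-faulty, and since $(\textbf{u})^n,(\textbf{v})^n$ are faulty necessarily $j\ne n$; this supplies the first coordinate of a surviving geodesic and reduces the task to extending a shortest $\textbf{u}$--$\textbf{v}$ path from $(\textbf{u})^j$ onward, which I would control by building the geodesic coordinate by coordinate while the (few) remaining faults cannot block progress.

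For the remaining (unsymmetric) cross-half pairs, $H_{Q_n}(\textbf{u},\textbf{v})\le n-1$ gives slack, and I would route through a substitute crossing. Writing $\textbf{v}'=(\textbf{v})^n\in{Q^{\{0\}}_n}$, every neighbour $\textbf{w}=(\textbf{v}')^i$ of $\textbf{v}'$ has $(\textbf{w})^n=(\textbf{v})^i$ adjacent to $\textbf{v}$, so $\langle\textbf{u},P_s,\textbf{w},(\textbf{w})^n,\textbf{v}\rangle$ costs $d_{{Q^{\{0\}}_n}-\mathcal{A}^n_{1,0}}(\textbf{u},\textbf{w})+2$. As the $n-1$ neighbours of $\textbf{v}'$ (and those of $\textbf{v}$) are pairwise at Hamming distance two, each member of $\mathcal{F}^n_1$ blocks at most one such $\textbf{w}$, so at most $a_0+a_1\le n-2<n-1$ choices are blocked and a usable $\textbf{w}$ remains. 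The crux is then to choose this $\textbf{w}$ on a geodesic from $\textbf{u}$ so that $d_{{Q^{\{0\}}_n}-\mathcal{A}^n_{1,0}}(\textbf{u},\textbf{w})\le n-1$; parity forces $H(\textbf{u},\textbf{w})\in\{H_{Q_n}(\textbf{u},\textbf{v})-2,\,H_{Q_n}(\textbf{u},\textbf{v})\}$, and keeping the in-half distance short against the at most $n-3$ faults is exactly the fault-tolerant routing estimate I anticipate being the most technical point. Assembling the cases yields $d_{Q_n-\mathcal{F}^n_1}(\textbf{u},\textbf{v})\le n+1$ for all pairs, closing the induction.
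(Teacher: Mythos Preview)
Your approach diverges from the paper's at the first move: you fix the splitting coordinate once and for all so that $\mathcal{B}^n_1=\emptyset$, whereas the paper chooses the split \emph{per pair} $(\textbf{u},\textbf{v})$. For an unsymmetric pair the paper splits along a coordinate on which $\textbf{u}$ and $\textbf{v}$ agree, so both land in the \emph{same} half $Q^{\{0\}}_n$ and your ``delicate'' cross-half case never arises; the argument then finishes by a short case split on $|\mathcal{A}^n_{1,1}|$, invoking either the induction hypothesis, \Cref{lemma2.2}, or \Cref{lemma3.2}. For a symmetric pair the paper uses \Cref{lemma3.1} to select a direction $j$ with $(\textbf{u})^j,(\textbf{v})^j$ non-faulty, relabels $j$ as $n$, and then discards the structure of $\mathcal{F}^n_1$ altogether: at most $2(n-2)$ faulty vertices means some half carries at most $n-2$ of them, and \Cref{lemma2.2} produces a path of length $\le n$ from $\textbf{u}$ to $(\textbf{v})^n$ inside that half.

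The two gaps you flag are real under your setup and are not closed. In the symmetric subcase ``building the geodesic coordinate by coordinate'' is not a proof: a greedy advance can be blocked, and although a global count of the $n!$ geodesics against the $\le n-2$ faulty $Q_1$'s can in fact be pushed through, you have not supplied it. In the unsymmetric cross-half subcase with both $(\textbf{u})^n$ and $(\textbf{v})^n$ faulty, the route $\langle\textbf{u},P_s,\textbf{w},(\textbf{w})^n,\textbf{v}\rangle$ needs $d_{Q^{\{0\}}_n-\mathcal{A}^n_{1,0}}(\textbf{u},\textbf{w})\le n-1$, but your induction hypothesis only yields $\le n$; you rightly call this ``the crux'' and leave it open. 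The paper's adaptive split sidesteps both issues entirely.
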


\begin{proof} We prove this lemma by induction on $n$. By Lemma~\ref{lemma3.5}, the lemma holds for $n=4$. Thus, we assume that this lemma holds for $4\le k\le n-1$, i.e., $D^s_f(Q_k;Q_1)\le k+1$ for $4 \le k \le n-1$.  Note that $\kappa^s(Q_n;Q_1)$=$n-1$ by Lemma~\ref{lemma2.3}, then $Q_n-\mathcal{F}^n_1$ is connected for $| \mathcal{F}^n_1|\le n-2$. And let $\textbf{u}$ and $\textbf{v}$ be any two vertices in $Q_n-\mathcal{F}^n_1$.
	
\noindent 	\textbf{Case 1}. $\textbf{u}$ and $\textbf{v}$ are symmetric. We may assume $\textbf{u}=00 \cdots 0$ and $\textbf{v}=11 \cdots 1$. Since $n\ge 5$ and $| \mathcal{F}^n_1|\le n-2$, by  Lemma~\ref{lemma3.1}, there must exist a pair of vertices $(\textbf{u})^{j}$ and $(\textbf{v})^{j}$ in $Q_n-\mathcal{F}^n_1$  for some $j\in \{1,2, \ldots,n\}$. Thus we may assume $(\textbf{u})^{n},(\textbf{v})^{n}\in Q_n-\mathcal{F}^n_1$.
	Note that $\mathcal{F}^n_1$ contains at most $2(n-2)$ vertices. So we may assume ${Q^0_n}$ has at most $n-2$ faulty  vertices. Then there exists a path $\langle \textbf{u}, \textit{P}_s, (\textbf{v})^n, \textbf{v} \rangle$ between $\textbf{u}$ and $\textbf{v}$ in ${Q_n}-\mathcal{F}^n_1$, where $\textit{P}_s$ is in ${Q^0_n}-\mathcal{F}^n_1$. By Lemma~\ref{lemma2.2}, we have $ D^s_f({Q^0_n};Q_1)\le n$. Thus, $l(P_s)\le n$ and $d_{{Q_n}-\mathcal{F}^n_1}(\textbf{u}, \textbf{v})\le n+1$.
	
\noindent 	\textbf{Case 2}.  $\textbf{u}$ and $\textbf{v}$ are unsymmetric.  Without loss of generality, we may assume $\textbf{u}$, $\textbf{v}$ $\in$ ${Q^0_n}$.
	
\noindent  \textbf{Case 2.1}. $| \mathcal{A}^n_{1,1}|\ge 1$. Then $| \mathcal{A}^n_{1,0}|+|\mathcal{B}^n_1| \le n-3$. Thus, there exists a path $\langle \textbf{u},\textit{P}_s, \textbf{v} \rangle$ between $\textbf{u}$ and $\textbf{v}$ in $Q_n-\mathcal{F}^n_1$, where $P_s$ is in ${Q^0_n}-\mathcal{A}^n_{1,0}-\mathcal{B}^n_1$. By the induction hypothesis, we infer $l(\textit{P}_s)\le n$, and so we get $d_{Q_n-\mathcal{F}^n_1}(\textbf{u}$, $\textbf{v})\le n$.
	
\noindent 	\textbf{Case 2.2}. $| \mathcal{A}^n_{1,1}|=0$. If $| \mathcal{A}^n_{1,0}|=0$, then ${Q^0_n}-\mathcal{A}^n_{1,0}-\mathcal{B}^n_1={Q^0_n}-\mathcal{B}^n_1$. Thus, there exists a path $\langle \textbf{u},\textit{P}_s,\textbf{v}\rangle$ between $\textbf{u}$ and $\textbf{v}$ in $Q_n-\mathcal{F}^n_1$, where $P_s$ is in ${Q^0_n}-\mathcal{B}^n_1$. Since $l(\textit{P}_s)\le D({Q^0_n}-\mathcal{B}^n_1)\le D_f(Q_{n-1})=n$, $d_{Q_n-\mathcal{F}^n_1}(\textbf{u}$, $\textbf{v})\le n$. If $| \mathcal{A}^n_{1,0}|\ge 1$, then there exists a path $\langle \textbf{u}, (\textbf{u})^{n},\textit{P}_s, (\textbf{v})^{n},\textbf{v}\rangle$ between $\textbf{u}$ and $\textbf{v}$ in $Q_n-\mathcal{F}^n_1$, where $P_s$ is in ${Q^1_n}-\mathcal{B}^n_1$ and $|\mathcal{B}^n_1| \le n-3$. By Lemma~\ref{lemma3.2}, we have $D({Q^1_n}-\mathcal{B}^n_1)=n-1$. Then $l(\textit{P}_s)\le n-1$, and so $d_{Q_n-\mathcal{F}^n_1}(\textbf{u}$, $\textbf{v})\le n+1$.
\end{proof}

Combining  Theorem~\ref{theorem3.3}, Lemma~\ref{lemma3.4}, and Lemma~\ref{lemma3.6} with the fact $D^s_f(G;W)\ge D_f(G;W)$, we have the following result.

\begin{mytheorem}\label{theorem3.7}
	$D^s_f(Q_3;Q_1)=D_f(Q_3;Q_1)=3$ and $D^s_f(Q_n;Q_1)=D_f(Q_n;Q_1)=n+1$ if $n \geq 4$.
\end{mytheorem}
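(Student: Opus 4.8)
The plan is to prove Theorem~\ref{theorem3.7} purely by assembling the pieces already established, treating the cases $n=3$ and $n\ge 4$ separately and in each case squeezing the two fault diameters between matching lower and upper bounds. The one inequality that does all the transferring work is $D^s_f(G;W)\ge D_f(G;W)$, noted in the introduction: it lets every upper bound proved for the substructure version descend automatically to the structure version, while every lower bound proved for the structure version ascends automatically to the substructure version.

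First I would dispose of $n=3$. Theorem~\ref{theorem3.3} already gives $D^s_f(Q_3;Q_1)=3$. Applying $D^s_f\ge D_f$ yields $D_f(Q_3;Q_1)\le 3$. For the matching lower bound I would invoke the fault-free instance: removing zero $Q_1$-structures is permitted (the number removed is at most $\kappa^s(Q_3;Q_1)-1$), so the maximum in the definition of $D_f(Q_3;Q_1)$ is at least $D(Q_3)=3$. Hence $D_f(Q_3;Q_1)=3=D^s_f(Q_3;Q_1)$. Note that Lemma~\ref{lemma3.4} cannot be used here because it is stated only for $n\ge 4$; this is the single place where one must supply a lower bound by hand rather than quoting it.

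Then, for $n\ge 4$, I would combine three facts. Lemma~\ref{lemma3.4} supplies the lower bounds $D_f(Q_n;Q_1)\ge n+1$ and $D^s_f(Q_n;Q_1)\ge n+1$. Lemma~\ref{lemma3.6} supplies the upper bound $D^s_f(Q_n;Q_1)\le n+1$; together with its lower bound this already forces $D^s_f(Q_n;Q_1)=n+1$. Finally $D^s_f\ge D_f$ gives $D_f(Q_n;Q_1)\le D^s_f(Q_n;Q_1)=n+1$, which with the lower bound from Lemma~\ref{lemma3.4} yields $D_f(Q_n;Q_1)=n+1$. This completes both equalities.

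There is essentially no obstacle: all the combinatorial difficulty has been absorbed into Lemma~\ref{lemma3.6} (the induction on $n$, which reduces the $n$-dimensional case to lower-dimensional subcubes) and into Lemma~\ref{lemma3.4} (the explicit faulty configuration realizing the lower bound). The only point demanding attention is bookkeeping on the ranges: remembering that the $n=3$ lower bound for $D_f$ must come from the trivial diameter $D(Q_3)=3$ rather than from Lemma~\ref{lemma3.4}, and that the inequality $D^s_f\ge D_f$ is applied in the correct direction, so that the upper bound established for the substructure fault diameter is what controls the structure fault diameter.
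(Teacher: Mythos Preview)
Your proposal is correct and matches the paper's approach exactly: the paper derives the theorem in one line by combining Theorem~\ref{theorem3.3}, Lemma~\ref{lemma3.4}, Lemma~\ref{lemma3.6}, and the inequality $D^s_f(G;W)\ge D_f(G;W)$. Your only addition is making explicit the lower bound $D_f(Q_3;Q_1)\ge D(Q_3)=3$ for the $n=3$ case, which the paper leaves implicit; this is a welcome clarification rather than a departure.
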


\section{$Q_m$-structure fault diameter}

Sabir et al.~proved that  $\kappa(Q_n;Q_m)=\kappa^{sc}(Q_n;Q_m)=n-m$ \cite{04}. By the definition, we have 
\[
D_f(Q_n;Q_m)=\text{max}\{D(Q_n-\mathcal{F}_1)\,|\,\mathcal{F}_1 \text{ is a set of } Q_m\text{-structures and } |\mathcal{F}_1|\le n-m-1\},
\]
where every $Q_m$-structure is isomorphic to $Q_m$. And 
\[
D^{sc}_f(Q_n;Q_m)=\text{max}\{D(Q_n-\mathcal{F}_2)\,|\,\mathcal{F}_2\text{ is a set of } Q_m\text{-subcubes and } |\mathcal{F}_2|\le n-m-1\}, 
\]
where each element of $\mathcal{F}_2$ is independently isomorphic to some $Q_i$, $0\le i\le m$. Especially, if every $Q_m$-subcube in $\mathcal{F}_2$ is isomorphic to $Q_m$, then $D^{sc}_f(Q_n;Q_m)=D_f(Q_n;Q_m)$. This infer that $D^{sc}_f(Q_n;Q_m)\ge D_f(Q_n;Q_m)$.	Let's first determine $Q_m$-subcube fault diameter $D_f^{sc}(Q_n;Q_m)$.

\begin{mytheorem}\label{theorem3.20}
	$D^{sc}_f(Q_{m+2};Q_m)=m+2$ for $m\ge 0$.
\end{mytheorem}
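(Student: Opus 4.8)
The plan is to first pin down the relevant connectivity value and dispose of the easy bound, then reduce to a single concrete faulty subcube and finish with a short routing argument.

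Since $n-m=2$ when $n=m+2$, Lemma~\ref{lemma2.4} gives $\kappa^{sc}(Q_{m+2};Q_m)=2$, so the definition of $D^{sc}_f$ only permits the removal of $\kappa^{sc}(Q_{m+2};Q_m)-1=1$ or fewer $Q_m$-subcubes. The lower bound is immediate: removing no subcube leaves $Q_{m+2}$ itself, whose diameter is $m+2$, so $D^{sc}_f(Q_{m+2};Q_m)\ge m+2$. Everything therefore reduces to showing that deleting a single $Q_m$-subcube $F$ cannot push the diameter above $m+2$, i.e.\ $D(Q_{m+2}-F)\le m+2$.

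For the upper bound I would exploit symmetry to fix the shape of $F$. Any $m$-dimensional subcube of $Q_{m+2}$ is obtained by fixing exactly two of the $m+2$ coordinates; decomposing $Q_{m+2}=Q_m\Box Q_2$ along those two coordinates writes $F=(Q_m,\textbf{s}_0)$ for some $\textbf{s}_0\in V(Q_2)$, and since the hypercube automorphism group acts transitively on subcubes of a fixed dimension, a bit-complementing automorphism lets me assume $\textbf{s}_0=00$. The decisive structural fact is that $Q_2\cong C_4$ with cyclic order $00-01-11-10-00$, so $Q_2-\{00\}$ is the path $01-11-10$. Consequently the three surviving copies $(Q_m,01)$, $(Q_m,11)$, $(Q_m,10)$ are linked in a path, consecutive copies joined by a perfect matching (the external-neighbor edges), while each copy is itself an intact $Q_m$.

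The routing then splits according to the positions of the two surviving vertices $\textbf{u}=(\textbf{a},\textbf{s})$ and $\textbf{v}=(\textbf{b},\textbf{t})$, in each case moving inside a copy to align the $Q_m$-coordinate and then crossing along the surviving path. If $\textbf{u},\textbf{v}$ lie in the same copy, a path inside that intact $Q_m$ has length $H_{Q_m}(\textbf{a},\textbf{b})\le m$. If they lie in consecutive copies, I align within the source copy and take one matching edge, giving length $H_{Q_m}(\textbf{a},\textbf{b})+1\le m+1$. The only remaining case is the two extreme copies $(Q_m,01)$ and $(Q_m,10)$: here I align inside $(Q_m,01)$ and then use the two matching edges $(\textbf{b},01)\to(\textbf{b},11)\to(\textbf{b},10)$, for a total length $H_{Q_m}(\textbf{a},\textbf{b})+2\le m+2$. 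Every vertex on each route stays in one of the three intact copies, so no path meets $F$, and in all cases $d_{Q_{m+2}-F}(\textbf{u},\textbf{v})\le m+2$. Combined with the lower bound this yields $D^{sc}_f(Q_{m+2};Q_m)=m+2$.

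I expect no serious obstacle. The two points needing care are the symmetry reduction identifying $F$ with $(Q_m,00)$ (which rests on transitivity of the automorphism group on equidimensional subcubes) and the bookkeeping that the aligning sub-paths never leave their intact copy. The whole argument hinges on the elementary fact that deleting one vertex of $C_4$ leaves a path of length $2$, which caps the number of required ``vertical'' crossings at two and hence the diameter at $m+2$; the uniformity of the argument also covers the degenerate values $m=0$ and $m=1$ with no separate treatment.
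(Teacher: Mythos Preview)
Your argument is correct and takes a genuinely different route from the paper. The paper proves the upper bound by induction on $m$: it settles the base cases $m=0,1$ via the known values $D_f(Q_2)=2$ and $D^s_f(Q_3;Q_1)=3$, and for the inductive step splits $Q_{m+2}$ along one coordinate into two copies of $Q_{m+1}$, assumes the deleted subcube lies in $Q^{\{0\}}_{m+2}$, and routes through $Q^{\{1\}}_{m+2}$ or invokes the induction hypothesis. You instead decompose $Q_{m+2}=Q_m\Box Q_2$ once and for all, normalise to $F=(Q_m,00)$, and read off the bound directly from the fact that $C_4$ minus a vertex is a path of length~$2$. Your approach is cleaner, avoids induction entirely, and makes transparent why the answer is exactly $m+2$; the paper's inductive approach, while heavier here, has the virtue of matching the template it reuses for the harder bounds on $D^{sc}_f(Q_n;Q_m)$ when $n-m\ge 3$.

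One point to watch: in this paper $D^{sc}_f$ is the \emph{subcube} analogue of the \emph{substructure} fault diameter, so the single deleted piece is allowed to be any subcube of dimension at most $m$, not only dimension exactly $m$ (the paper's proof accordingly has a separate Case~1 with $|F^{m+2}_m|=0$, handled via the induction hypothesis for $Q_{m-1}$). Your write-up treats only the case of a full $Q_m$. This is harmless---any $Q_k$-subcube with $k\le m$ sits inside some $Q_m$-subcube $F'$, and $Q_{m+2}$ minus the smaller subcube is a supergraph of $Q_{m+2}-F'$, so distances can only be shorter---but you should say so explicitly to cover the definition in full.
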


\begin{proof} Firstly, we prove $D^{sc}_f(Q_{m+2};Q_m)\le m+2$ by induction on $m$. Clearly, $D^{sc}_f(Q_2;Q_0)=D_f(Q_2)=2$ and $D^{sc}_f(Q_3;Q_1)=D^{s}_f(Q_3;Q_1)=3$, so the lemma is true for $m=0,1$. In the following, we assume that the statement holds for $1\le k\le m-1$, i.e., $D^{sc}_f(Q_{k+2};Q_k)\le k+2$ for $1\le k\le m-1$. Note that $\kappa^{sc}(Q_{m+2};Q_m) = 2$ by Lemma~\ref{lemma2.4}. For $| \mathcal{F}^{m+2}_m|\le 1$, without loss of generality, we may assume $\mathcal{F}^{m+2}_m\subseteq \bigcup^m_{i=0}S_i({Q^0_{m+2}})$. Let $\textbf{u}$ and $\textbf{v}$ be any two vertices in $Q_{m+2}-\mathcal{F}^{m+2}_m$.

\noindent  \textbf{Case 1}. $| F^{m+2}_m|=0$. Since $\mathcal{F}^{m+2}_m=F^{m+2}_m\cup\mathcal{F}^{m+2}_{m-1}$, $| \mathcal{F}^{m+2}_{m-1}|\le 1$. By the induction hypothesis, we have $D(Q^0_{m+2}-\mathcal{F}^{m+2}_{m-1})\le D^{sc}_f(Q_{m+1};Q_{m-1})\le m+1$.
Then $d_{Q_{m+2}-\mathcal{F}^{m+2}_m}(\textbf{u}$, $\textbf{v})\le m+1$ if $\textbf{u},\textbf{v}\in Q^0_{m+2}$. Obviously,  $d_{Q_{m+2}-\mathcal{F}^{m+2}_m}(\textbf{u}$, $\textbf{v})\le m+1$ if $\textbf{u},\textbf{v}\in Q^1_{m+2}$. Since $(\textbf{u},(\textbf{u})^{m+2})\in E(Q_{m+2}-\mathcal{F}^{m+2}_m)$, there exists a path $\langle \textbf{u}, (\textbf{u})^{m+2}, \textit{P}_s,\textbf{v}\rangle$ between $\textbf{u}\in Q^0_{m+2}$ and  $\textbf{v}\in Q^1_{m+2}$ in $Q_{m+2}-\mathcal{F}^{m+2}_m$, where $P_s$ is in ${Q^1_{m+2}}$. Note that $l(\textit{P}_s)\le D(Q^1_{m+2})=m+1$, then $d_{Q_{m+2}-\mathcal{F}^{m+2}_m}(\textbf{u}$, $\textbf{v})\le m+2$.

\noindent \textbf{Case 2}. $| F^{m+2}_m|=1$. Since $Q^0_{m+2}-F^{m+2}_m$ is connected and isomorphic to $Q_{m}$, $d_{Q_{m+2}-\mathcal{F}^{m+2}_m}(\textbf{u}$, $\textbf{v})\le m$ if $\textbf{u},\textbf{v}\in Q^0_{m+2}$. With similar argument used in Case 1, one can get that $d_{Q_{m+2}-\mathcal{F}^{m+2}_m}(\textbf{u}$, $\textbf{v})\le m+1$ if $\textbf{u},\textbf{v}\in Q^1_{m+2}$, and $d_{Q_{m+2}-\mathcal{F}^{m+2}_m}(\textbf{u}$, $\textbf{v})\le m+2$ if $\textbf{u}\in Q^0_{m+2}$ and  $\textbf{v}\in Q^1_{m+2}$.

Since there are $2^{m+1}$ pairs of symmetric vertices in $Q_{m+2}$ and $2^{m+1}> 2^m$, we have $D^{sc}_f(Q_{m+2};\\Q_m)\ge m+2$ by Lemma~\ref{lemma2.7}.

In summary, we get $D^{sc}_f(Q_{m+2};Q_m)=m+2$.
\end{proof}

\begin{mylemma}\label{lemma3.21}
	$D^{sc}_f(Q_{m+3};Q_m)\le m+4$ for $m\ge 0$.
\end{mylemma}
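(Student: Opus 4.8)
The plan is to prove the bound by induction on $m$, splitting $Q_{m+3}$ along its last coordinate into $Q^{\{0\}}_{m+3}$ and $Q^{\{1\}}_{m+3}$ (each isomorphic to $Q_{m+2}$) and routing any two surviving vertices $\textbf{u},\textbf{v}$ through whichever half is least damaged. By Lemma~\ref{lemma2.4} we have $\kappa^{sc}(Q_{m+3};Q_m)=3$, so $|\mathcal{F}^{m+3}_m|\le 2$ and $Q_{m+3}-\mathcal{F}^{m+3}_m$ is connected. For the base case $m=0$ I would simply note $D^{sc}_f(Q_3;Q_0)=D_f(Q_3)=4=m+4$. In the inductive step I record the distribution of the at most two faulty subcubes relative to the splitting coordinate by setting $a_0=|\mathcal{A}^{m+3}_{m,0}|$, $a_1=|\mathcal{A}^{m+3}_{m,1}|$ and $b=|\mathcal{B}^{m+3}_m|$, so that $a_0+a_1+b\le 2$. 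The crucial observation is that a straddling subcube in $\mathcal{B}^{m+3}_m$ uses the splitting coordinate as a free direction and so has dimension at least $1$; hence its trace in either half is a subcube of dimension at most $m-1$, and the effective number of faulty subcubes (of dimension $\le m$) seen inside $Q^{\{h\}}_{m+3}$ is $a_h+b$.

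Two tools then control a single half. If $a_h+b\le 1$, Theorem~\ref{theorem3.20} gives $D(Q^{\{h\}}_{m+3}-\text{faults})\le m+2$. If instead $b=2$, which forces $a_0=a_1=0$, then each half carries exactly two faulty subcubes of dimension $\le m-1$; since $\kappa^{sc}(Q_{m+2};Q_{m-1})=3$ this is within tolerance, and the induction hypothesis $D^{sc}_f(Q_{m+2};Q_{m-1})\le m+3$ applies. A short count over $(a_0,a_1,b)$ shows that in every admissible distribution at least one half either satisfies $a_h+b\le 1$ or is governed by the $b=2$ case, so it stays connected with diameter at most $m+3$.

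For the routing I would separate the unsymmetric and symmetric cases, mirroring Lemma~\ref{lemma3.6}. When $\textbf{u}$ and $\textbf{v}$ are unsymmetric they agree in some coordinate, so I split along it and place both in $Q^{\{0\}}_{m+3}$; if $a_0+b\le 1$ or $b=2$ I route entirely inside $Q^{\{0\}}_{m+3}$, obtaining length at most $m+2$ or $m+3$. Otherwise $a_0\ge 1$ with $a_1=0$, so $Q^{\{1\}}_{m+3}$ carries at most one fault, and I use the detour $\langle \textbf{u},(\textbf{u})^{m+3},P_s,(\textbf{v})^{m+3},\textbf{v}\rangle$ with $P_s\subseteq Q^{\{1\}}_{m+3}$; the two crossing vertices survive because $\textbf{u},\textbf{v}\notin Y$ forces $(\textbf{u})^{m+3},(\textbf{v})^{m+3}$ out of the trace of the straddling subcube $Y$, giving length at most $2+(m+2)=m+4$. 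When $\textbf{u}$ and $\textbf{v}$ are symmetric I first invoke Lemma~\ref{lemma3.1} (legitimate since $m=(m+3)-3$ and $|\mathcal{F}^{m+3}_m|\le 2\le m+2$) to choose, after relabelling, the splitting coordinate $m+3$ so that both $(\textbf{u})^{m+3}$ and $(\textbf{v})^{m+3}$ survive; routing through the less damaged half as $\langle \textbf{u},P_s,(\textbf{v})^{m+3},\textbf{v}\rangle$ or $\langle \textbf{u},(\textbf{u})^{m+3},P_s,\textbf{v}\rangle$ then yields length at most $m+3$ in the good-half case and at most $m+4$ in the $b=2$ case.

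The main obstacle I expect is the all-straddling case $b=2$. It is the only situation in which both halves inherit two faults and neither Theorem~\ref{theorem3.20} nor Lemma~\ref{lemma3.2} applies directly; handling it is what forces the induction on $m$ and requires verifying that the traces of the straddling subcubes genuinely have dimension $\le m-1$, are vertex-disjoint, and respect the tolerance $\kappa^{sc}(Q_{m+2};Q_{m-1})-1=2$, so that the induction hypothesis is available. The secondary delicate point, present in both main cases, is confirming that a crossing vertex used for a detour is not swallowed by the trace of a straddling subcube; this is precisely what the pairing along the splitting coordinate guarantees, supplemented by Lemma~\ref{lemma3.1} in the symmetric case.
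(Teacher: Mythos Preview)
Your proposal is correct and follows essentially the same approach as the paper: induction on $m$, splitting $Q_{m+3}$ along a coordinate, separating the symmetric and unsymmetric cases, invoking Lemma~\ref{lemma3.1} in the symmetric case, and routing through whichever half carries at most one fault (via Theorem~\ref{theorem3.20}) or, in the all-straddling case $b=2$, through a half whose two traces have dimension at most $m-1$ (via the induction hypothesis $D^{sc}_f(Q_{m+2};Q_{m-1})\le m+3$). The only cosmetic differences are that the paper lists both $m=0$ and $m=1$ as base cases while you correctly note that $m=0$ suffices, and that you spell out explicitly why the crossing vertices $(\textbf{u})^{m+3},(\textbf{v})^{m+3}$ avoid any straddling subcube, a point the paper leaves implicit.
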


\begin{proof}
	We prove this lemma by induction on $m$. Clearly,  $D^{sc}_f(Q_3;Q_0)=D_f(Q_3)=4$ and $D^{sc}_f(Q_4;Q_1)=D^s_f(Q_4;Q_1)\le 5$, so the result holds for $m=0,1$. In the following, we assume that the statement holds for $1\le k\le m-1$, i.e., $D^{sc}_f(Q_{k+3};Q_k)\le k+4$ for $1\le k\le m-1$. Note that $\kappa^{sc}(Q_{m+3};Q_m) = 3$. So we assume $| \mathcal{F}^{m+3}_m|\le 2$ and let $\textbf{u}$ and $\textbf{v}$ be any two vertices in $Q_{m+3}-\mathcal{F}^{m+3}_m$.
	
\noindent 	\textbf{Case 1}.  $\textbf{u}$ and $\textbf{v}$ are symmetric. Without loss of generality, we take $\textbf{u}=00 \cdots 0$ and $\textbf{v}=11 \cdots 1$. For $m\ge 2$ and $| \mathcal{F}^{m+3}_m|\le 2\le m+2$, by Lemma~\ref{lemma3.1},
	there exists a pair of vertices $(\textbf{u})^{j}$ and $(\textbf{v})^{j}$ in $Q_{m+3}-\mathcal{F}^{m+3}_m$. For convenience, we may assume $(\textbf{u})^{m+3},(\textbf{v})^{m+3}\in Q_{m+3}-\mathcal{F}^{m+3}_m$.
	
\noindent 	\textbf{Case 1.1}. $| \mathcal{A}^{m+3}_{m,0}|+| \mathcal{B}^{m+3}_m|=2$. Then $| \mathcal{A}^{m+3}_{m,1}|=0$ and
	$| \mathcal{B}^{m+3}_m|\le 2$. Thus,
	there exists a path $\langle \textbf{u}, (\textbf{u})^{m+3},\textit{P}_s, \textbf{v} \rangle$ between $\textbf{u}$ and $\textbf{v}$ in $Q_{m+3}-\mathcal{F}^{m+3}_m$, where $\textit{P}_s$ is in ${Q^1_{m+3}}-\mathcal{B}^{m+3}_m$. By the induction hypothesis, we have
		$l(\textit{P}_s)
		 \le D({Q^1_{m+3}}-\mathcal{B}^{m+3}_m) \le D^{sc}_f(Q_{m+2};Q_{m-1})
		 \le m+3$.
	Thus $d_{Q_{m+3}-\mathcal{F}^{m+3}_m}(\textbf{u}$, $\textbf{v})\le m+4$.
	
\noindent 	 \textbf{Case 1.2}. $| \mathcal{A}^{m+3}_{m,0}|+| \mathcal{B}^{m+3}_m|\le 1$. Then
	there exists a path $\langle \textbf{u},\textit{P}_s,(\textbf{v})^{m+3}, \textbf{v} \rangle$ between $\textbf{u}$ and $\textbf{v}$ in $Q_{m+3}-\mathcal{F}^{m+3}_m$, where $\textit{P}_s$ is in ${Q^0_{m+3}}-\mathcal{A}^{m+3}_{m,0}-\mathcal{B}^{m+3}_m$. By Theorem~\ref{theorem3.20}, $l(\textit{P}_s)
		 \le D({Q^0_{m+3}}-\mathcal{A}^{m+3}_{m,0}-\mathcal{B}^{m+3}_m) \le D^{sc}_f(Q_{m+2};Q_m) = m+2$.
	So we have  $d_{Q_{m+3}-\mathcal{F}^{m+3}_m}(\textbf{u}$, $\textbf{v})\le m+3$.
	
\noindent 	\textbf{Case 2}.  $\textbf{u}$ and $\textbf{v}$ are unsymmetric. We may assume $\textbf{u}$, $\textbf{v}$ $\in$ ${Q^0_n}$.

\noindent  \textbf{Case 2.1.}. $| \mathcal{A}^{m+3}_{m,0}|+| \mathcal{B}^{m+3}_m|=2$ and $| \mathcal{B}^{m+3}_m|=2$. Then $| \mathcal{A}^{m+3}_{m,0}|=0$. Thus, there exists a path $\langle \textbf{u},\textit{P}_s,\textbf{v} \rangle$ between $\textbf{u}$ and $\textbf{v}$ in $Q_{m+3}-\mathcal{F}^{m+3}_m$, where $\textit{P}_s$ is in ${Q^0_{m+3}}-\mathcal{B}^{m+3}_m$. By the induction hypothesis, $l(\textit{P}_s)
		\le D({Q^0_{m+3}}-\mathcal{B}^{m+3}_m) \le D^{sc}_f(Q_{m+2};Q_{m-1}) \le m+3$.	So $d_{Q_{m+3}-\mathcal{F}^{m+3}_m}(\textbf{u}$, $\textbf{v})\le m+3$.
	
\noindent  \textbf{Case 2.2.}. $| \mathcal{A}^{m+3}_{m,0}|+| \mathcal{B}^{m+3}_m|=2$ and $| \mathcal{B}^{m+3}_m|\le 1$. Note that  $| \mathcal{A}^{m+3}_{m,1}|=0$. Thus,
	there exists a path $\langle \textbf{u}, (\textbf{u})^{m+3},\textit{P}_s, (\textbf{v})^{m+3},\textbf{v} \rangle$ between $\textbf{u}$ and $\textbf{v}$ in $Q_{m+3}-\mathcal{F}^{m+3}_m$, where $\textit{P}_s$ is in ${Q^1_{m+3}}-\mathcal{B}^{m+3}_m$. By Theorem~\ref{theorem3.20}, we have $l(\textit{P}_s)
		\le D({Q^1_{m+3}}-\mathcal{B}^{m+3}_m)
		 \le D^{sc}_f(Q_{m+2};Q_m)
		=m+2$.
	So $d_{Q_{m+3}-\mathcal{F}^{m+3}_m}(\textbf{u}$, $\textbf{v})\le m+4$.
	
\noindent  \textbf{Case 2.3.} $| \mathcal{A}^{m+3}_{m,0}|+| \mathcal{B}^{m+3}_m|\le 1$. There exists a path $\langle \textbf{u},\textit{P}_s, \textbf{v} \rangle$ between $\textbf{u}$ and $\textbf{v}$ in $Q_{m+3}-\mathcal{F}^{m+3}_m$, where $\textit{P}_s$ is in ${Q^0_{m+3}}-\mathcal{A}^{m+3}_{m,0}-\mathcal{B}^{m+3}_m$. By Case 1.2,  we get $l(\textit{P}_s)\le m+2$, and thus $d_{Q_{m+3}-\mathcal{F}^{m+3}_m}(\textbf{u}$, $\textbf{v})\le m+2$.
\end{proof}

\begin{mylemma}\label{lemma3.22} Let $m\ge 0$ and $n\ge m+3$. If $D^{sc}_f(Q_{n-1};Q_m)\leqslant n$, then $D(Q_{n-1}-\mathcal{F}^{n-1}_m)\le n-1$ for $| \mathcal{F}^{n-1}_m|\le n-m-3$.
\end{mylemma}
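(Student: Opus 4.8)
The plan is to run two facts in tandem: the hypothesis, which already caps every distance at $n$, and the bipartiteness of $Q_{n-1}$, which fixes the parity of every path length. Since $\kappa^{sc}(Q_{n-1};Q_m)=n-m-1$ by Lemma~\ref{lemma2.4}, the bound $|\mathcal{F}^{n-1}_m|\le n-m-3$ is one below $\kappa^{sc}(Q_{n-1};Q_m)-1$, so $Q_{n-1}-\mathcal{F}^{n-1}_m$ is connected and the hypothesis gives $d_{Q_{n-1}-\mathcal{F}^{n-1}_m}(\textbf{u},\textbf{v})\le n$ for every pair $\textbf{u},\textbf{v}$. Fix such a pair and set $d=H_{Q_{n-1}}(\textbf{u},\textbf{v})$; because $Q_{n-1}$ is bipartite, every $\textbf{u}$--$\textbf{v}$ path, in particular every path in $Q_{n-1}-\mathcal{F}^{n-1}_m$, has length congruent to $d$ modulo $2$.

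First I would dispose of the pairs with $d\not\equiv n\pmod 2$, which includes every symmetric pair, where $d=n-1$. For such a pair the surviving distance is at most $n$ and congruent to $d$, hence at most $n-1$. This single parity remark already settles the genuinely worst case, the antipodal vertices, and this is the reason the argument is clean there.

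The remaining pairs have $d\equiv n\pmod 2$, so $d\le n-2<n-1$ and $\textbf{u},\textbf{v}$ agree in some coordinate $c$. Here one cannot argue inside $Q_{n-1}$, since the hypothesis caps the distance at $n$, a value that is parity-consistent with $d$ and so cannot be improved directly. Instead I would restrict to the hyperplane $H\cong Q_{n-2}$ obtained by fixing coordinate $c$, which contains both $\textbf{u}$ and $\textbf{v}$. The faults induced on $H$ are the faults lying inside $H$ together with the traces of the faults crossing $c$; each is a vertex-disjoint subcube of dimension at most $m$, and their number is at most $|\mathcal{F}^{n-1}_m|\le n-m-3=\kappa^{sc}(Q_{n-2};Q_m)-1$. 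Consequently the fault-diameter bound for the $(n-2)$-cube gives $D(H-\text{induced faults})\le n-1$, whence $d_{Q_{n-1}-\mathcal{F}^{n-1}_m}(\textbf{u},\textbf{v})\le n-1$; and since $n-1\not\equiv d\pmod 2$, parity sharpens this to $d_{Q_{n-1}-\mathcal{F}^{n-1}_m}(\textbf{u},\textbf{v})\le n-2$. Combining the two cases yields $D(Q_{n-1}-\mathcal{F}^{n-1}_m)\le n-1$.

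The main obstacle is exactly this even-offset case: parity alone leaves $n$ admissible, so the gain must come from dropping into a hyperplane where the ambient bound falls to $n-1$, of opposite parity to $d$, at no extra routing cost. Making this rigorous requires checking that passing to a hyperplane cannot increase the number of faulty subcubes beyond $\kappa^{sc}(Q_{n-2};Q_m)-1$, and then invoking the $(n-2)$-dimensional fault-diameter bound. It is this last ingredient that couples the lemma to a downward induction on $n$: the base case $n=m+3$ forces $\mathcal{F}^{n-1}_m=\emptyset$ and reduces to $D(Q_{m+2})=m+2=n-1$, while Theorem~\ref{theorem3.20} supplies the bound needed for $n=m+4$, and the induction hypothesis supplies it for larger $n$.
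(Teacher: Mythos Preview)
Your approach is sound and, for the symmetric case, genuinely cleaner than the paper's. The paper splits into symmetric versus unsymmetric pairs; for symmetric $\textbf{u},\textbf{v}$ it invokes Lemma~\ref{lemma3.1} to find a coordinate $j$ with $(\textbf{u})^j,(\textbf{v})^j$ both surviving, routes through that neighbour, and then runs a fault-distribution case analysis inside the chosen half. Your parity observation---the hypothesis already caps the distance at $n$, while bipartiteness pins the distance to the parity of $H_{Q_{n-1}}(\textbf{u},\textbf{v})=n-1$---dispatches the symmetric case (and every pair with $d\not\equiv n\pmod 2$) in one line, bypassing Lemma~\ref{lemma3.1} entirely. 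For the remaining pairs (your $d\equiv n\pmod 2$, the paper's unsymmetric case) the two arguments coincide: pick a coordinate on which $\textbf{u}$ and $\textbf{v}$ agree, restrict to that hyperplane $H\cong Q_{n-2}$, observe that at most $n-m-3=\kappa^{sc}(Q_{n-2};Q_m)-1$ subcubes of dimension at most $m$ are induced, and bound the distance by $D^{sc}_f(Q_{n-2};Q_m)$.

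One point to tighten: you assert that ``the induction hypothesis supplies'' the bound $D^{sc}_f(Q_{n-2};Q_m)\le n-1$ for $n\ge m+5$, but the inductive hypothesis of \emph{this} lemma is a conditional (``if $D^{sc}_f(Q_{n'-1};Q_m)\le n'$ then \ldots''), and it does not by itself deliver the unconditional fault-diameter bound one level down. What actually feeds that step is the hypothesis of the lemma at level $n-1$, namely $D^{sc}_f(Q_{n-2};Q_m)\le n-1$; this is \emph{not} a formal consequence of the level-$n$ hypothesis $D^{sc}_f(Q_{n-1};Q_m)\le n$. The paper leans on the same inequality in its Case~2 and is equally elliptical about its provenance. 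The clean fix is either to strengthen the lemma's hypothesis to $D^{sc}_f(Q_k;Q_m)\le k+1$ for all $m+2\le k\le n-1$ (which is exactly what is available when the lemma is invoked inside the induction of Lemma~\ref{lemma3.23}), or to state explicitly that Lemmas~\ref{lemma3.22} and~\ref{lemma3.23} are proved by a joint induction on $n$. With that clarified, your argument goes through, and the parity trick is a worthwhile simplification.
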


\begin{proof} For $| \mathcal{F}^{n-1}_m|\le n-m-3$, we prove $D(Q_{n-1}-\mathcal{F}^{n-1}_m)\le n-1$ by induction on $n$. Obviously, if $| \mathcal{F}^{m+2}_m|= 0$, we have $D(Q_{m+2}-\mathcal{F}^{m+2}_m)=D(Q_{m+2})=m+2$. Next, we prove $D(Q_{m+3}-\mathcal{F}^{m+3}_m)\le m+3$ for $| \mathcal{F}^{m+3}_m|\le 1$. Without loss of generality, we may assume $\mathcal{F}^{m+3}_m\subseteq \bigcup^m_{i=0}S_i({Q^0_{m+3}})$. Let $\textbf{x}$ and $\textbf{y}$ be two vertices in $Q_{m+3}-\mathcal{F}^{m+3}_m$. By Theorem~\ref{theorem3.20}, $d_{Q_{m+3}-\mathcal{F}^{m+3}_m}(\textbf{x}$, $\textbf{y})\le m+3$ if $\textbf{x},\textbf{y}\in Q^0_{m+3}$ or $\textbf{x},\textbf{y}\in Q^1_{m+3}$. Since $(\textbf{x},(\textbf{x})^{m+3})\in E(Q_{m+3}-\mathcal{F}^{m+3}_m)$, $d_{Q_{m+3}-\mathcal{F}^{m+3}_m}(\textbf{x}$, $\textbf{y})\le m+3$ for $\textbf{x}\in Q^0_{m+3}$ and $\textbf{y}\in Q^1_{m+3}$. This means that the result holds for $n=m+3$ and $n=m+4$.
Thus, we assume that the result also holds for $m+4\le k\le n-1$, i.e., $D(Q_{k-1}-\mathcal{F}^{k-1}_m)\le k-1$ for $m+4\le k\le n-1$. Let $\textbf{u}$ and $\textbf{v}$ be any two vertices in $Q_{n-1}-\mathcal{F}^{n-1}_m$.

\noindent \textbf{Case 1}.  $\textbf{u}$ and $\textbf{v}$ are symmetric. We set $\textbf{u}=00 \cdots 0$ and $\textbf{v}=11 \cdots 1$. For $m\le n-5$ and $| \mathcal{F}^{n-1}_m|\le n-m-3$, by Lemma~\ref{lemma3.1}, there exists a pair of vertices $(\textbf{u})^{j}$ and $(\textbf{v})^{j}$ in $Q_{n-1}-\mathcal{F}^{n-1}_m$. We may assume $(\textbf{u})^{n-1},(\textbf{v})^{n-1}\in Q_{n-1}-\mathcal{F}^{n-1}_m$.

\noindent \textbf{Case 1.1}. $| \mathcal{A}^{n-1}_{m,0}|+| \mathcal{B}^{n-1}_m|=n-m-3$. Then $| \mathcal{A}^{n-1}_{m,1}|=0$ and
$| \mathcal{B}^{n-1}_m|\le n-m-3$. This means that
there exists a path $\langle \textbf{u}, (\textbf{u})^{n-1},\textit{P}_s, \textbf{v} \rangle$ between $\textbf{u}$ and $\textbf{v}$ in $Q_{n-1}-\mathcal{F}^{n-1}_m$, where $\textit{P}_s$ is in ${Q^1_{n-1}}-\mathcal{B}^{n-1}_m$. By the induction hypothesis, we have $
	l(\textit{P}_s)
	 \le D({Q^1_{n-1}}-\mathcal{B}^{n-1}_m) \le D(Q_{n-2}-\mathcal{F}^{n-2}_{m-1})  \le n-2$.
Thus $d_{Q_{n-1}-\mathcal{F}^{n-1}_m}(\textbf{u}$, $\textbf{v})\le n-1$.

\noindent\textbf{Case 1.2}. $| \mathcal{A}^{n-1}_{m,0}|+| \mathcal{B}^{n-1}_m|\le n-m-4$. Then there exists a path $\langle \textbf{u},\textit{P}_s,(\textbf{v})^{n-1}, \textbf{v} \rangle$ between $\textbf{u}$ and $\textbf{v}$ in $Q_{n-1}-\mathcal{F}^{n-1}_m$, where $\textit{P}_s$ is in ${Q^0_{n-1}}- \mathcal{A}^{n-1}_{m,0}- \mathcal{B}^{n-1}_m$. By the induction hypothesis,
we have $l(\textit{P}_s)  \le D({Q^0_{n-1}}-\mathcal{A}^{n-1}_{m,0}- \mathcal{B}^{n-1}_m) \le  D(Q_{n-2}-\mathcal{F}^{n-2}_{m}) \le n-2$.
So $d_{Q_{n-1}-\mathcal{F}^{n-1}_m}(\textbf{u}$, $\textbf{v})\le n-1$.

\noindent \textbf{Case 2}.  $\textbf{u}$ and $\textbf{v}$ are unsymmetric. We may assume $\textbf{u}$, $\textbf{v}$ $\in$ ${Q^0_{n-1}}$.
Note that $| \mathcal{A}^{n-1}_{m,0}|+| \mathcal{B}^{n-1}_m|\le n-m-3$ and $D^{sc}_f(Q_{n-1};Q_m)\le n$. Then there exists a path $\langle \textbf{u},\textit{P}_s, \textbf{v} \rangle$ between $\textbf{u}$ and $\textbf{v}$ in $Q_{n-1}-\mathcal{F}^{n-1}_m$, where $\textit{P}_s$ is in ${Q^0_{n-1}}-\mathcal{A}^{n-1}_{m,0}-\mathcal{B}^{n-1}_m$. We have
$d_{Q_{n-1}-\mathcal{F}^{n-1}_m}(\textbf{u}, \textbf{v})
	 \le D(Q^0_{n-1}-\mathcal{A}^{n-1}_{m,0}-\mathcal{B}^{n-1}_m) \le D^{sc}_f(Q_{n-2};Q_m) \le n-1$.
\end{proof}

\begin{mylemma}\label{lemma3.23}
	$D^{sc}_f(Q_n;Q_m)\le n+1$ for $m\ge 0$ and $n\ge m+3$.
\end{mylemma}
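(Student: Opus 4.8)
The plan is to prove the bound by induction on $n$, splitting $Q_n$ along the last coordinate into the two $(n-1)$-cubes $Q^{\{0\}}_n$ and $Q^{\{1\}}_n$ and classifying the at most $\kappa^{sc}(Q_n;Q_m)-1=n-m-1$ faulty subcubes (Lemma~\ref{lemma2.4}) into those contained in $Q^{\{0\}}_n$ (the set $\mathcal{A}^n_{m,0}$), those contained in $Q^{\{1\}}_n$ (the set $\mathcal{A}^n_{m,1}$), and the straddling ones (the set $\mathcal{B}^n_m$), each of which meets either half in a $Q_{m-1}$. The base case $n=m+3$ is exactly Lemma~\ref{lemma3.21}, which yields $D^{sc}_f(Q_{m+3};Q_m)\le m+4=n+1$; hence I may assume $n\ge m+4$ (so $m\le n-4$) and that the statement holds in all smaller dimensions, i.e. $D^{sc}_f(Q_{n-1};Q_m)\le n$ and, when $m\ge 1$, $D^{sc}_f(Q_{n-1};Q_{m-1})\le n$. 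Fixing two non-faulty vertices $\mathbf{u},\mathbf{v}$, I will construct a $\mathbf{u}$--$\mathbf{v}$ path of length at most $n+1$.

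For the symmetric case I take $\mathbf{u}=00\cdots0$ and $\mathbf{v}=11\cdots1$ and apply Lemma~\ref{lemma3.1} (valid since $m\le n-3$ and $n-m-1\le n-1$) to obtain a surviving dimension, which I may take to be the $n$th, so that $(\mathbf{u})^n,(\mathbf{v})^n$ are non-faulty. If $|\mathcal{A}^n_{m,0}|+|\mathcal{B}^n_m|=n-m-1$ then $\mathcal{A}^n_{m,1}=\emptyset$, and I route $\langle\mathbf{u},(\mathbf{u})^n,P_s,\mathbf{v}\rangle$ through $Q^{\{1\}}_n-\mathcal{B}^n_m$, where the surviving faults number at most $n-m-1=(n-1)-(m-1)-1$ and have dimension $\le m-1$, so the inductive bound $D^{sc}_f(Q_{n-1};Q_{m-1})\le n$ gives $l(P_s)\le n$ and total length $\le n+1$. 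Otherwise $|\mathcal{A}^n_{m,0}|+|\mathcal{B}^n_m|\le n-m-2$, and I route $\langle\mathbf{u},P_s,(\mathbf{v})^n,\mathbf{v}\rangle$ inside $Q^{\{0\}}_n$, where the at most $n-m-2=(n-1)-m-1$ faults of dimension $\le m$ again let $D^{sc}_f(Q_{n-1};Q_m)\le n$ yield length $\le n+1$.

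For the unsymmetric case I place both $\mathbf{u},\mathbf{v}$ in $Q^{\{0\}}_n$. If $|\mathcal{A}^n_{m,1}|\ge 1$, then $Q^{\{0\}}_n$ carries at most $n-m-2$ faults and a path inside it has length $\le n$ by induction. If $\mathcal{A}^n_{m,1}=\emptyset$ and $\mathcal{A}^n_{m,0}=\emptyset$, the only faults touching $Q^{\{0\}}_n$ are the $\le n-m-1$ straddling pieces of dimension $\le m-1$, again giving length $\le n$. The delicate subcase is $\mathcal{A}^n_{m,1}=\emptyset$ with $|\mathcal{A}^n_{m,0}|\ge 1$: here I must cross via $\langle\mathbf{u},(\mathbf{u})^n,P_s,(\mathbf{v})^n,\mathbf{v}\rangle$, spending two ferry edges, so a naive inductive estimate $l(P_s)\le n$ would overshoot to $n+2$. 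Two observations rescue it. First, a straddling $Q_m$ has its last coordinate free, so it cannot contain $(\mathbf{u})^n$ (resp. $(\mathbf{v})^n$) without also containing the non-faulty $\mathbf{u}$ (resp. $\mathbf{v}$); hence both external neighbours survive. Second, $|\mathcal{A}^n_{m,0}|\ge 1$ forces $|\mathcal{B}^n_m|\le n-m-2=n-(m-1)-3$, so Lemma~\ref{lemma3.22} with parameter $m-1$ (whose hypothesis $D^{sc}_f(Q_{n-1};Q_{m-1})\le n$ is the induction hypothesis) gives the sharpened estimate $D(Q^{\{1\}}_n-\mathcal{B}^n_m)\le n-1$, whence $l(P_s)\le n-1$ and the whole path has length at most $n+1$. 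The case $m=0$ is easier still, since then $\mathcal{B}^n_m=\emptyset$ and $Q^{\{1\}}_n$ is fault-free.

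I expect the main obstacle to be exactly this last subcase: the two crossing edges threaten to exceed the target by one, and the only way to absorb them is the strengthened diameter estimate of Lemma~\ref{lemma3.22}, which is precisely why that lemma was pre-established. Making this work requires careful bookkeeping of whether $\mathcal{A}^n_{m,0}$ is empty (so that a single-half route of length $\le n$ suffices) or nonempty (so that a crossing route together with Lemma~\ref{lemma3.22} is forced), together with the observation that straddling subcubes never block the external neighbours $(\mathbf{u})^n,(\mathbf{v})^n$; the remaining cases are routine applications of the induction hypothesis and Theorem~\ref{theorem3.20}.
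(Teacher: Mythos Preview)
Your proposal is correct and follows essentially the same route as the paper: induction on $n$ with base case Lemma~\ref{lemma3.21}, the same use of Lemma~\ref{lemma3.1} in the symmetric case, and the same reliance on Lemma~\ref{lemma3.22} (with parameter $m-1$) to absorb the two ferry edges in the delicate crossing subcase. Your case partition for the unsymmetric situation is organised by the values of $|\mathcal{A}^n_{m,1}|$ and $|\mathcal{A}^n_{m,0}|$, whereas the paper partitions by whether $|\mathcal{A}^n_{m,0}|+|\mathcal{B}^n_m|$ equals $n-m-1$ or not and then sub-splits by $|\mathcal{A}^n_{m,0}|$; these are logically equivalent, and if anything your explicit verification that the external neighbours $(\mathbf{u})^n,(\mathbf{v})^n$ survive the straddling faults fills a small gap the paper leaves implicit.
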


\begin{proof}  We prove this lemma by induction on $n$. By Lemma~\ref{lemma3.21}, this lemma holds for $n=m+3$. Thus, we assume that this lemma holds for $m+3\le k\le n-1$, i.e., $D^{sc}_f(Q_k;Q_m)\le k+1$ for $m+3\le k\le n-1$. Note that $\kappa^{sc}(Q_n;Q_m) = n-m$. Then, for $| \mathcal{F}^n_m|\le n-m-1$, let $\textbf{u}$ and $\textbf{v}$ be any two vertices in $Q_n-\mathcal{F}^n_m$.

\noindent  \textbf{Case 1}. $\textbf{u}$ and $\textbf{v}$ are symmetric. Note that $m\le n-4$ and $| \mathcal{F}^n_m|\le n-m-1$. By Lemma~\ref{lemma3.1}, there must exist a pair of vertices $(\textbf{u})^{j}$ and $(\textbf{v})^{j}$ in $Q_n-\mathcal{F}^n_m$ for some $j\in \{1,2, \ldots,n\}$, thus we may assume $(\textbf{u})^{n},(\textbf{v})^{n}\in Q_n-\mathcal{F}^n_m$.

\noindent \textbf{Case 1.1}. $| \mathcal{A}^n_{m,0}|+| \mathcal{B}^n_m|=n-m-1$. Then $| \mathcal{A}^n_{m,1}|=0$ and $| \mathcal{B}^n_m|\le n-m-1$. This implies that
there exists a path $\langle \textbf{u}, (\textbf{u})^{n},\textit{P}_s, \textbf{v} \rangle$ between $\textbf{u}$ and $\textbf{v}$ in $Q_n-\mathcal{F}^n_m$, where $\textit{P}_s$ is in ${Q^1_n}-\mathcal{B}^n_m$. By the induction hypothesis, we have $
	l(\textit{P}_s)
	 \le D({Q^1_n}-\mathcal{B}^n_m) \le D^{sc}_f(Q_{n-1};Q_{m-1}) \le n$.
So $d_{Q_n-\mathcal{F}^n_m}(\textbf{u}$, $\textbf{v})\le n+1$.

\noindent \textbf{Case 1.2}. $| \mathcal{A}^n_{m,0}|+| \mathcal{B}^n_m|\le n-m-2$. Then
there exists a path $\langle \textbf{u},\textit{P}_s,(\textbf{v})^{n}, \textbf{v} \rangle$ between $\textbf{u}$ and $\textbf{v}$ in $Q_n-\mathcal{F}^n_m$, where $\textit{P}_s$ is in ${Q^0_n}-\mathcal{A}^n_{m,0}-\mathcal{B}^n_m$. By the induction hypothesis,
we have $l(\textit{P}_s)
	 \le D({Q^0_n}-\mathcal{A}^n_{m,0}-\mathcal{B}^n_m) \le D^{sc}_f(Q_{n-1};Q_m) \le n$.
Thus, $d_{Q_n-\mathcal{F}^n_m}(\textbf{u}$, $\textbf{v})\le n+1$.

\noindent \textbf{Case 2}.  $\textbf{u}$ and $\textbf{v}$ are unsymmetric.  Without loss of generality, we may assume $\textbf{u}$, $\textbf{v}$ $\in$ ${Q^0_n}$.

\noindent \textbf{Case 2.1.} $| \mathcal{A}^n_{m,0}|+| \mathcal{B}^n_m|=n-m-1$, and $| \mathcal{A}^n_{m,0}|\ge 1$. Then $| \mathcal{A}^n_{m,1}|=0$ and $| \mathcal{B}^n_m|\le n-m-2$. Thus, there exists a path $\langle \textbf{u}, (\textbf{u})^{n},\textit{P}_s,(\textbf{v})^{n}, \textbf{v} \rangle$ between $\textbf{u}$ and $\textbf{v}$ in $Q_n-\mathcal{F}^n_m$, where $\textit{P}_s$ is in ${Q^1_n}-\mathcal{B}^n_m$. By the induction hypothesis, we have $D^{sc}_f(Q_{n-1};Q_{m-1})\le n$.
Then, by Lemma~\ref{lemma3.22}, $
	l(\textit{P}_s)
	 \le D({Q^1_n}-\mathcal{B}^n_m) \le D(Q_{n-1}-\mathcal{F}^{n-1}_{m-1}) \le n-1$.
And so $d_{Q_n-\mathcal{F}^n_m}(\textbf{u}$, $\textbf{v})\le n+1$.

\noindent \textbf{Case 2.2.} $| \mathcal{A}^n_{m,0}|+| \mathcal{B}^n_m|=n-m-1$, and $| \mathcal{A}^n_{m,0}|=0$. Then $| \mathcal{B}^n_m|=n-m-1$.
So there exists a path $\langle \textbf{u},\textit{P}_s,\textbf{v} \rangle$ between $\textbf{u}$ and $\textbf{v}$ in $Q_n-\mathcal{F}^n_m$, where $\textit{P}_s$ is in ${Q^0_n}-\mathcal{B}^n_m$. By the induction hypothesis, we have $l(\textit{P}_s) \le D({Q^0_n}-\mathcal{B}^n_m) \le D^{sc}_f(Q_{n-1};Q_{m-1}) \le n$.
Thus $d_{Q_n-\mathcal{F}^n_m}(\textbf{u}$, $\textbf{v})\le n$.

\noindent \textbf{Case 2.3.} $| \mathcal{A}^n_{m,0}|+| \mathcal{B}^n_m|\le n-m-2$. Then
there exists a path $\langle \textbf{u},\textit{P}_s, \textbf{v} \rangle$ between $\textbf{u}$ and $\textbf{v}$ in $Q_n-\mathcal{F}^n_m$, where $\textit{P}_s$ is in ${Q^0_n}-\mathcal{A}^n_{m,0}-\mathcal{B}^n_m$. By Case 1.2, we get $l(\textit{P}_s)\le n$. Therefore $d_{Q_n-\mathcal{F}^n_m}(\textbf{u}$, $\textbf{v})\le n$.
\end{proof}

\begin{mylemma}\label{lemma3.24}
	For $m\ge 0$ and $n\ge m+3$, $D_f(Q_n;Q_m)\ge n+1$ and $D^{sc}_f(Q_n;Q_m)\ge n+1$.
\end{mylemma}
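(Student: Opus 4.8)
The plan is to exhibit a single explicit fault configuration consisting of $n-m-1$ subcubes isomorphic to $Q_m$, together with a pair of vertices whose distance in the remaining graph is at least $n+1$. Since every $Q_m$-subcube is in particular a $Q_m$-structure, and since $\kappa(Q_n;Q_m)=\kappa^{sc}(Q_n;Q_m)=n-m$ by Lemma~\ref{lemma2.4}, such a configuration simultaneously witnesses $D_f(Q_n;Q_m)\ge n+1$ and $D^{sc}_f(Q_n;Q_m)\ge n+1$, so only one construction is needed. The idea generalizes the $m=1$ argument of Lemma~\ref{lemma3.4}: trap the source inside a clean copy of $Q_m$ so that every escape route staying in ${Q^{\{0\}}_n}$ is blocked, forcing any path to detour through ${Q^{\{1\}}_n}$ toward a target chosen to make that detour costly.

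Concretely, I would take the source $\textbf{x}=00\cdots0$ and let $T$ be the $Q_m$-subcube on the free coordinates $1,\dots,m$ (all other coordinates fixed to $0$), so $\textbf{x}\in T$. For each $i\in\{m+1,\dots,n-1\}$ let $W_i$ be the $Q_m$-subcube consisting of all vertices whose $i$-th bit is $1$, whose bits in $\{m+1,\dots,n\}\setminus\{i\}$ are $0$, and whose first $m$ bits are arbitrary; set $\mathcal{F}^n_m=\{W_{m+1},\dots,W_{n-1}\}$. One checks immediately that $|\mathcal{F}^n_m|=n-m-1$, that the $W_i$ are pairwise disjoint and disjoint from $T$, and that flipping the $i$-th bit of any vertex of $T$ (for $i\in\{m+1,\dots,n-1\}$) produces a vertex of $W_i$. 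Hence, within ${Q^{\{0\}}_n}$, every edge leaving $T$ is incident to a deleted vertex, so the only way to leave $T$ in $Q_n-\mathcal{F}^n_m$ is to flip the $n$-th bit into ${Q^{\{1\}}_n}$. Finally take the target $\textbf{y}=11\cdots10$; the hypothesis $n\ge m+3$ guarantees that at least two of the bits $m+1,\dots,n-1$ of $\textbf{y}$ equal $1$, so $\textbf{y}$ lies in no $W_i$ and is present in $Q_n-\mathcal{F}^n_m$.

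For the distance bound, consider any $\textbf{x}$--$\textbf{y}$ path in $Q_n-\mathcal{F}^n_m$ (which exists since removing fewer than $\kappa^{sc}(Q_n;Q_m)$ subcubes keeps $Q_n$ connected). Let $(\textbf{a},\textbf{b})$ be its first edge with $\textbf{a}\in T$ and $\textbf{b}\notin T$; by the trapping property $\textbf{b}=(\textbf{a})^{n}$, and the subpath from $\textbf{x}$ to $\textbf{a}$ lies entirely in $T$, hence has length at least $H_{Q_n}(\textbf{x},\textbf{a})=w$, where $w$ is the number of $1$'s of $\textbf{a}$. A short Hamming-distance computation gives $H_{Q_n}((\textbf{a})^{n},\textbf{y})=n-w$, so the total length is at least $w+1+(n-w)=n+1$, independently of the exit point $\textbf{a}$. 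This yields $d_{Q_n-\mathcal{F}^n_m}(\textbf{x},\textbf{y})\ge n+1$ and hence both claimed lower bounds.

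The main point requiring care is verifying the trapping property---that every escape from $T$ within ${Q^{\{0\}}_n}$ lands in a deleted $W_i$---together with the Hamming-distance bookkeeping that makes the cost $n-w$ of the second leg exactly cancel the $w$ steps spent reaching the exit. I would also highlight the role of the hypothesis $n\ge m+3$: when $n=m+2$ the single fault $W_{m+1}$ would contain $\textbf{y}$, the construction collapses, and the bound drops to $n$, consistent with Theorem~\ref{theorem3.20}.
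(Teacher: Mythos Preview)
Your proposal is correct and follows essentially the same approach as the paper: the paper also traps $\textbf{x}=00\cdots0$ inside the subcube $T=(Q_m,\textbf{t})$ of ${Q^{\{0\}}_n}$ by deleting the $n-m-1$ neighboring subcubes $(Q_m,\textbf{t}_i)$ (your $W_i$), takes $\textbf{y}=11\cdots10$, and performs the same Hamming-distance bookkeeping $s+1+(n-s)=n+1$ on the forced detour through ${Q^{\{1\}}_n}$. Your write-up is in fact slightly more careful than the paper's in verifying that $\textbf{y}\notin\bigcup_i W_i$ (where $n\ge m+3$ is used) and in isolating the first exit edge from $T$.
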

\begin{proof} We take $\textbf{x}=00\cdots0\in Q_m$ and assume $Q_m\subseteq {Q^0_n}$. Note that ${Q^0_n}=Q_m\Box Q_{n-m-1}$. Let $\textbf{t}=\textbf{x}$, and we take $N_{Q_{n-m-1}}(\textbf{t})=\{\textbf{t}_1, \textbf{t}_2,\ldots, \textbf{t}_{n-m-1}\}$. We set $\textbf{y}=11\cdots10\in {Q^0_n}$. Clearly, $\bigcup^{n-m-1}_{i=1}(Q_m,\textbf{t}_i)$ contains $n-m-1$ subcubes and the removal of $\bigcup^{n-m-1}_{i=1}(Q_m,\textbf{t}_i)$ disconnects $Q^0_n$ such that $Q_m$ and $\textbf{y}$ are in the distinct components. For any $\textbf{z}\in Q_m$, we have that $\textbf{z}$ and $\textbf{y}$ are also in the distinct components of ${Q^0_n}-\bigcup^{n-m-1}_{i=1}(Q_m,\textbf{t}_i)$. Since ${Q_n}-\bigcup^{n-m-1}_{i=1}(Q_m,\textbf{t}_i)$ is connected, there is a path $\langle \textbf{x}, P_1, \textbf{z}, (\textbf{z})^{n}, \textit{P}_2, \textbf{y} \rangle$ between $\textbf{x}$ and $\textbf{y}$ in ${Q_n}-\bigcup^{n-m-1}_{i=1}(Q_m,\textbf{t}_i)$. Note that $H_{Q_n}((\textbf{z})^{n},\textbf{y})=n-s$ if $H_{Q_m}(\textbf{x},\textbf{z})=s$,  and $0\le s\le m$. Then $l(P_1)\ge s$ and $l(P_2)\ge n-s$.
Thus, the length of any path between $\textbf{x}$ and $\textbf{y}$ is at least $n+1$. This implies $D_f(Q_n;Q_m)\ge n+1$. Since $D_f(Q_n;Q_m)\le D^{sc}_f(Q_n;Q_m)$, $D^{sc}_f(Q_n;Q_m)\ge n+1$.
\end{proof}

By Lemmas~\ref{lemma3.23} and \ref{lemma3.24}, we have the following theorem:

\begin{mytheorem}\label{theorem3.25}
	$D^{sc}_f(Q_n;Q_m)=n+1$ for $m\ge 0$ and $n\ge m+3$.
\end{mytheorem}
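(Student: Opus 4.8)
The plan is to prove the equality by a sandwich argument, establishing that $n+1$ is simultaneously an upper and a lower bound for $D^{sc}_f(Q_n;Q_m)$ under the hypotheses $m\ge 0$ and $n\ge m+3$. Since both supporting bounds have already been isolated as separate lemmas with exactly the same range of parameters, the final step is purely a matter of citing them together. Concretely, I would first invoke Lemma~\ref{lemma3.23} to obtain $D^{sc}_f(Q_n;Q_m)\le n+1$, and then invoke Lemma~\ref{lemma3.24} to obtain $D^{sc}_f(Q_n;Q_m)\ge n+1$. Combining these two inequalities forces $D^{sc}_f(Q_n;Q_m)=n+1$, which is the claimed identity.

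Because the two ingredients are already in hand, there is no genuine obstacle remaining in the theorem itself; the entire difficulty has been front-loaded into the two lemmas. The upper bound (Lemma~\ref{lemma3.23}) is the more delicate of the two: it proceeds by induction on $n$, with base case $n=m+3$ handled by Lemma~\ref{lemma3.21}, and the inductive step requires splitting $Q_n$ into the halves ${Q^{\{0\}}_n}$ and ${Q^{\{1\}}_n}$, carefully distributing the faulty subcubes of $\mathcal{F}^n_m$ across the two halves according to the sizes of $\mathcal{A}^n_{m,0}$, $\mathcal{A}^n_{m,1}$, and $\mathcal{B}^n_m$, and routing a path that crosses between halves via a surviving matching edge. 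The auxiliary transfer Lemma~\ref{lemma3.22} is what lets one bound the length of the connecting segment $P_s$ inside a half by $n-1$ rather than $n$ in the tight cases. The lower bound (Lemma~\ref{lemma3.24}) is a single explicit construction: deleting the $n-m-1$ neighboring subcubes $\cup_{i=1}^{n-m-1}(Q_m,\mathbf{t}_i)$ creates a bottleneck that separates $Q_m$ from the far vertex $\mathbf{y}=11\cdots10$ within ${Q^{\{0\}}_n}$, so every surviving path from $\mathbf{x}$ to $\mathbf{y}$ must detour through ${Q^{\{1\}}_n}$, forcing length at least $n+1$.

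Thus the proof I would write is essentially one line: apply Lemma~\ref{lemma3.23} for ``$\le$'', apply Lemma~\ref{lemma3.24} for ``$\ge$'', and conclude equality. The only point worth verifying is that the parameter ranges of the two lemmas coincide, which they do—both assume $m\ge 0$ and $n\ge m+3$—so no additional case analysis or boundary checking is needed.
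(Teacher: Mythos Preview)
Your proposal is correct and matches the paper's own proof exactly: the paper simply states that Theorem~\ref{theorem3.25} follows from Lemmas~\ref{lemma3.23} and~\ref{lemma3.24}, which is precisely the sandwich argument you describe. Your additional commentary on how those two lemmas are proved is accurate and aligns with the paper's arguments as well.
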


On one hand, according to Theorem~\ref{theorem3.25} and the fact $D_f(Q_n;Q_m)\le D^{sc}_f(Q_n;Q_m)$, we obtain 
\linebreak
$D_f(Q_n;Q_m)\le n+1$. On the other hand, by Lemma~\ref{lemma3.24}, we derive $D_f(Q_n;Q_m)\ge n+1$. Thus, $D_f(Q_n;Q_m)= n+1$. Combining this with Theorem~\ref{theorem3.20}, we get the following theorem:

\begin{mytheorem}\label{theorem3.26}
	For $n \geq 3$ and $0 \leq m \leq n - 2$, $D_f(Q_n;Q_m)=$
	$\begin{cases}
		n  &  \textit{if } n=m+2,\\
		n+1 & \textit{if } n\ge m+3.
	\end{cases}$
\end{mytheorem}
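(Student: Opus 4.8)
The plan is to derive Theorem~\ref{theorem3.26} from the subcube fault diameter already computed, using the observation that deleting complete $Q_m$-structures is a special case of deleting $Q_m$-subcubes. Indeed, any faulty set $F^n_m$ consisting of full $Q_m$-structures is in particular a faulty set $\mathcal{F}^n_m$ of subcubes of dimension at most $m$, so the graphs admitted in the definition of $D_f(Q_n;Q_m)$ form a subfamily of those admitted for $D^{sc}_f(Q_n;Q_m)$. Taking the maximum diameter over the smaller family yields $D_f(Q_n;Q_m)\le D^{sc}_f(Q_n;Q_m)$, which is the single structural fact that powers everything. Armed with this inequality, I would split into the two cases $n\ge m+3$ and $n=m+2$.

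For $n\ge m+3$ the upper bound is immediate: Theorem~\ref{theorem3.25} gives $D^{sc}_f(Q_n;Q_m)=n+1$, hence $D_f(Q_n;Q_m)\le n+1$. The matching lower bound is supplied by Lemma~\ref{lemma3.24}, which already exhibits a configuration of exactly $n-m-1=\kappa(Q_n;Q_m)-1$ deleted copies $\cup_{i=1}^{n-m-1}(Q_m,\textbf{t}_i)$ forcing two vertices to distance at least $n+1$. Since each $(Q_m,\textbf{t}_i)$ is a genuine $Q_m$-structure, this construction is admissible for the structure fault diameter, so $D_f(Q_n;Q_m)\ge n+1$ and equality follows.

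For $n=m+2$ we have $\kappa(Q_n;Q_m)=n-m=2$ by Lemma~\ref{lemma2.4}, so at most one structure may be deleted. Theorem~\ref{theorem3.20} gives $D^{sc}_f(Q_{m+2};Q_m)=m+2$, whence $D_f(Q_{m+2};Q_m)\le m+2=n$. For the lower bound it suffices to note that the fault-free cube $Q_{m+2}$ (the empty removal) is itself admitted and has diameter $m+2=n$; equivalently, deleting a single $Q_m$ removes $2^m<2^{m+1}$ vertices and leaves the cube connected, so Lemma~\ref{lemma2.7} forces diameter at least $m+2$. Either way $D_f(Q_{m+2};Q_m)\ge n$, giving equality.

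The genuinely hard work sits upstream, in the inductive upper bound $D^{sc}_f(Q_n;Q_m)\le n+1$ of Lemma~\ref{lemma3.23} (with its auxiliary Lemma~\ref{lemma3.22} and base case Lemma~\ref{lemma3.21}); granting those, the only points requiring care in Theorem~\ref{theorem3.26} itself are the reduction $D_f\le D^{sc}_f$ and the verification that the extremal configuration of Lemma~\ref{lemma3.24} is built from full $Q_m$-structures rather than lower-dimensional substructures, so that the lower bound transfers to the structure fault diameter without loss. I expect no further obstacle once these two transfers are checked.
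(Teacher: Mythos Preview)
Your proposal is correct and follows essentially the same route as the paper: the paper also derives the upper bound from the inequality $D_f(Q_n;Q_m)\le D^{sc}_f(Q_n;Q_m)$ together with Theorem~\ref{theorem3.25} (for $n\ge m+3$) and Theorem~\ref{theorem3.20} (for $n=m+2$), and invokes Lemma~\ref{lemma3.24} for the lower bound. Your explicit checks that the extremal configuration of Lemma~\ref{lemma3.24} uses full $Q_m$-structures and that the $n=m+2$ lower bound follows from Lemma~\ref{lemma2.7} (or the empty removal) are slightly more detailed than the paper's one-line derivation, but the argument is the same.
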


\section{Concluding remarks}

In this paper, we introduced two novel concepts of fault diameters for graphs: the \textit{structure fault diameter} and the \textit{substructure fault diameter}, applying these definitions to analyze the $n$-dimensional hypercube $Q_n$. Unlike the conventional wide diameter approach, our method focuses on determining the maximum diameter between any two vertices under structure faults.
A natural extension of this work would involve determining the $K_{1,n}$-structure fault diameter and $K_{1,n}$-substructure fault diameter of various graph structures, further enriching our understanding of network robustness in the presence of complex fault scenarios.


\end{document}